\renewcommand{\theequation}{\thesection.\arabic{equation}}
\newtheorem{theorem}{Theorem}
\newtheorem{lemma}{Lemma}
\newtheorem{proposition}{Proposition}
\newtheorem{corollary}{Corollary}
\newtheorem{remark}{Remark}
\newtheorem{definition}{Definition}
\newcommand{\eqnsection}{
\renewcommand{\theequation}{\thesection.\arabic{equation}}
    \makeatletter
    \csname  @addtoreset\endcsname{equation}{section}
    \makeatother}
\def\sERRW{$\star$-ERRW }
\def\sVRJP{$\star$-VRJP }
\def\w{\omega}
\def\sss{{\mathcal S}}
\def\demi{{1\over 2}}
\def\P{{{\Bbb P}}}
\def\N{{{\Bbb N}}}
\def\R{{{\Bbb R}}}
\def\ppp{{{\mathcal P}}}
\def\aaa{{{\mathcal A}}}
\def\lll{{{\mathcal L}}}
\def\im{{{\hbox{Im}}}}
\def\E{{{\Bbb E}}}
\def\dive{{\hbox{div}}}
\def\hhh{{{\mathcal H}}}
\def\indic{{{\mathbbm 1}}}
\def\Id{{\hbox{Id}}}
\newtheorem{corol}{Corollary}[section]
\newtheorem{rema}{Remark}[section]
\newtheorem{exa}{Example}
\renewcommand{\le}{\leqslant}
\renewcommand{\ge}{\geqslant}
\renewcommand{\subset}{\subseteq}
\newcommand{\bal}{\begin{align*}}
\newcommand{\eal}{\end{align*}}
\newcommand{\beq}{\begin{eqnarray*}}
\newcommand{\eeq}{\end{eqnarray*}}
\newcommand{\bte}{\begin{theorem}}
\newcommand{\ete}{\end{theorem}}
\newcommand{\bl}{\begin{lemma}}
\newcommand{\el}{\end{lemma}}
\newcommand{\bd}{\begin{description}}
\newcommand{\ed}{\end{description}}
\newcommand{\bc}{\begin{cases}}
\newcommand{\ec}{\end{cases}}
\newcommand{\bp}{\begin{proof}}
\newcommand{\ep}{\end{proof}}
\newcommand{\bco}{\begin{corol}}
\newcommand{\eco}{\end{corol}}
\newcommand{\iy}{\infty}
\newcommand{\tx}{\text}
\newcommand{\Pb}{\mathbb{P}}
\newcommand{\al}{\alpha}
\newcommand{\be}{\beta}
\newcommand{\g}{\gamma}
\newcommand{\G}{\Gamma}
\newcommand{\ze}{\zeta}
\newcommand{\om}{\omega}
\def\bdes{\begin{description}}
\def\edes{\end{description}}
\def\kkk{{\mathcal K}}
\def\quart{{1\over 4}}
\def\un{{\bf 1}}
\def\sERRW{{$\star$-ERRW }}
\def\sERRWse{{$\star$-ERRW}}
\def\ggg{{\mathcal G}}
\def\red{}
\def\blue{}
\begin{document}
\author[S. Bacallado]{Sergio Bacallado}
\author[C. Sabot]{Christophe Sabot}
\address{Statistical Laboratory, D.1.10
Center for the Mathematical Sciences,
Wilberforce, Rd. Cambridge, CB3 0WB, United Kingdom}\email{sb2116@cam.ac.uk}
\address{Universit\'e Claude Bernard Lyon 1,
Institut Camille Jordan, CNRS UMR 5208, 43, Boulevard du 11 novembre 1918,
69622 Villeurbanne Cedex, France and Institut Universitaire de France} \email{sabot@math.univ-lyon1.fr}
\author[P. Tarr\`es]{Pierre~Tarr\`es}
\address{NYU-ECNU Institute of Mathematical Sciences at NYU Shanghai, China; Courant Institute of Mathematical Sciences, New York, USA; CNRS and Universit\'e Paris-Sorbonne, Laboratoire de Probabilit\'es, Statistique et Mod\'elisation, 75005 Paris, France. }\email{tarres@nyu.edu}
\title{The $\star$-Edge-Reinforced Random Walk}
\maketitle
\begin{abstract}
We define 
a linearly reinforced process called the $*$-Edge-Reinforced Random Walk (\sERRW) which can be seen as a Yaglom reversible, hence non-reversible, extension of the Edge-Reinforced Random Walk (ERRW) introduced by Coppersmith and Diaconis in 1986 \cite{coppersmith}.
This family of processes also generalizes the r-dependent ERRW introduced by Bacallado in 2009 \cite{Bacallado1}.
Under some assumptions on the initial weights, the \sERRW is  partially exchangeable in the sense of Diaconis and Freedman \cite{diaconis-freedman}, and thus it is a random walk in a random environment. The main result of the paper gives the explicit expression of the mixing law, hence extending the "magic formula" of Coppersmith and Diaconis from the case of mixtures of reversible Markov chains to the case of mixtures of Yaglom reversible Markov chains.
\end{abstract}
\section{Introduction}
\subsection{$\star$-directed graphs}
In this paper, we consider a directed graph $\ggg=(V,E)$ endowed with an involution on the vertices denoted by $*$, and such that
\begin{equation}
\label{invol}
(i,j)\in E\;\;\; \Longleftrightarrow \;\;\;(j^*,i^*)\in E,
\end{equation}
hence $\star$ is also an involution on $E$ by $\star((i,j))=(i,j)^*=(j^*,i^*)$. In the sequel, we sometimes write $i\to j$ for $(i,j)\in E$.
Denote by $V_0$ the set of fixed points of $\star$ 
$$
V_0=\{i\in V \hbox{ s.t. } i=i^*\},
$$
and by $V_1$ a subset of $V$ such that $V$ is a disjoint union
$$
V= V_0\sqcup V_1\sqcup V_1^*.
$$

Denote by $\tilde E$ the set of edges quotiented by the relation $(i,j)\sim(j^*,i^*)$.
For simplicity of notations, we also consider $\tilde E$ as a subset of $E$ obtained
by choosing a representative among each pairs of equivalent edges $(i,j)$ and $(j^*,i^*)$.

{\red
{\blue In our context,} a Markov Chain {\blue on the graph $\ggg$} with transition probability $p(i,j)$ from $i$ to $j$
will be called Yaglom reversible if and only if there exists a probability measure $\pi$ on $V$ such that, for all $i$, $j$ $\in V$, $(i,j)\in E$,  
\begin{align*}
\pi(i)p(i,j)&=\pi(j^*)p(j^*,i^*)\\
\pi(i)&=\pi(i^*).
\end{align*}
Note that this implies in particular that $\pi$ is an invariant measure for the Markov Chain.
This notion of Yaglom reversibility can be seen as  a generalization of the classic notion of reversibility {\blue in the same spirit as the notion introduced by} introduced by Yaglom  in \cite{Yaglom0,Yaglom} (see also \cite{Dobrushin}) in a continuous time and space setting, where the involution $\star$ {\blue is the counterpart of} the map $(x,\dot{x})\mapsto(x,-\dot{x})$ on the space and velocity coordinates. {\blue As we will see in the sequel, Yaglom reversible Markov chains appear naturally as mixing laws of the \sERRW.}
}

\subsection{The $\star$-Edge Reinforced Random Walk ($\star$-ERRW)}
\label{serrw} 
Suppose we are given some positive weights $(\alpha_{i,j})_{(i,j)\in E}$ on the edges invariant by the involution, i.e. such that
$$
\alpha_{i,j}=\alpha_{j^*,i^*}\text{ for all }(i,j)\in E.
$$

Let $(X_n)_{n\in\N}$ be a nearest-neighbor discrete-time process taking values in $V$. For all $(i,j)\in E$, denote by $N_{i,j}(n)$
the number of crossings of the edge $(i,j)$, i.e. 
$$
N_{i,j}(n)=\sum_{k=0}^{n-1}\indic_{(X_k,X_{k+1})=(i,j)},
$$
and let  
\begin{eqnarray}\label{alphan}
\alpha_{i,j}(n)= \alpha_{i,j}+ N_{i,j}(n)+N_{j^*,i^*}(n).
\end{eqnarray}

The process $(X_n)_{n\in\N}$ is called a $\star$-Edge Reinforced Random Walk (\sERRW) starting from $i_0$ 
if $X_0=i_0$ and, for all $n\in\N$ and $j\in V$, 
$$
\P(X_{n+1}=j\;|\; X_0,X_1, \ldots X_n)= \indic_{X_n\rightarrow j}{\alpha_{X_n,j}(n)\over \sum_{l, X_n\to l} \alpha_{X_n,l}(n)}.
$$
{\red

{\blue The \sERRW includes, as special cases, several models of reinforced random walks which already appeared in the literature. 

Firstly,} the \sERRW is a generalisation of the Edge-Reinforced Random Walk (ERRW) introduced in the seminal work of Diaconis and Coppersmith \cite{coppersmith}, which itself corresponds to the case where $\star$ is the identity map.}

{\blue Secondly, the} Random Walk in random Dirichlet Environment (RWDE), considered first in \cite{Enriquez-Sabot06} and analyzed in a series of papers (see \cite{Sabot-Tournier17} for a review), can also be seen as a particular case of \sERRWse. 
Indeed, given a directed graph $\ggg_1=(V_1, E_1)$, consider its "reversed" graph $\ggg_1=(V_1^*, E_1^*)$, where $V_1^*$ is a copy of $V_1$ and $E_1^*$ is obtained by reversing each edge in $E_1$. Then the full graph $\ggg$ is a disjoint union $(V_1\sqcup V_1^*,E_1\sqcup E_1^*)$. If $i_0\in V_1$, then the \sERRW is a RWDE on $\ggg_1$. In this sense, the \sERRW can be seen as an interpolation between the ERRW and the RWDE. An interesting modification of that construction is obtained by gluing the two graphs $\ggg_1$ and $\ggg_1^*$ at the starting point $i_0$.{ \blue  For that modification, the condition for partial exchangeability of the \sERRW (Proposition~\ref{bac} below) is the one under which the key property of statistical invariance by time reversal for RWDE holds (see \cite{sabot1} Lemma 1, and Remark~\ref{RWDE} below). 

Thirdly, motivated by questions coming from Bayesian statistics described in Section \ref{sec_Bayesian}, Bacallado \cite{bacallado2,bacallado2016bayesian} (see Section \ref{sec_Bayesian}     below) introduced models of $k$-dependent edge reinforced random walk and of reinforced random walk with amnesia, which can be seen as a special case of the \sERRWse. For these last two models, partial exchangeability, and thus a representation of the reinforced walk as a mixture of Markov chains, were proved under a condition on the initial weights, but the mixing law was not computed. 

In this paper, we prove partial exchangeability of the \sERRW under a simple condition on the weights, and the main result, Theorem~\ref{main}, gives an explicit formula for the mixing law, which lives on the space of Yaglom reversible Markov chains of the graph $\ggg$. Hence it extends the formula of Diaconis and Coppersmith for the ERRW. In particular, it provides the mixing measure for the examples described above, but the context of the \sERRW is much larger than these special models. 

Similarly as for the ERRW, a $\star$-Vertex Reinforced Jump Process can be associated with the \sERRWse. This process, which is introduced in this paper and analyzed in two separate papers \cite{starvrjp1,starvrjp2}, has a remarkable structure and exhibits several new phenomena. Finally, note that Yaglom reversibility also appears naturally in different models of self-interacting processes, see \cite{Tarres_Toth_Valko,Horvath_Toth_Veto}.}
\subsection{
{\blue Motivation from Bayesian statistics}.}
\label{sec_Bayesian}
{\red 
Recall that the ERRW introduced by Diaconis and Coppersmith is not a Markov Chain, but that it satisfies a partial exchangeability property, which implies a de Finetti-type representation as a mixture of reversible Markov chains.
Diaconis and Rolles \cite{diaconis-rolles} noted that the mixing law  $\mathcal{L}(i_0,\al)$ on the weights of the reversible Markov chains  of the ERRW starting from $i_0$ and initial weights $(\al_{i,j})$ can be seen as a prior distribution for Bayesian analysis of reversible Markov chains. Indeed, the posterior distribution of those weights, conditionally on the first $n$ steps of the walk, is by definition given by   $\mathcal{L}(X_n,\al(n))$, and thus it is a conjugate prior.
This prior has several appealing features including (i) a formula for the predictive distribution conditional on an observation of the Markov chain, and for other posterior moments, such as cycle probabilities, (ii) full support of the mixing distribution on the space of reversible Markov kernels, and (iii) a characterisation as the only prior with certain sufficient statistics for the predictive distribution, which motivates the choice from a subjectivist perspective \cite{rolles2003edge}. 

Bacallado \cite{bacallado2} extended this idea by introducing the $k$-dependent Reinforced Random Walk, which can be seen as a $\star$-ERRW on the de Bruijn graph $\ggg=(V,E)$ of a finite set $S$, defined as follows: $V=S^k$, $E=\{((i_1, \ldots i_k),(i_2, \ldots, i_{k+1})),\,i_1,\ldots i_{k+1}\in S\}$, with $\star$ the involution which maps $(i_1, \ldots i_k)\in V$ to the reversed sequence $(i_k, \ldots i_1)\in V$, which obviously satisfies \eqref{invol}; thus $V_0$ is the set of palindromes.
They enable to introduce a prior distribution for higher-order Markov chains which are reversible, in the sense that $(X_n)_{n\in \mathbb Z}$ is identically distributed to the time reversal $(X^*_{-n})_{n\in\mathbb Z}$ under the invariant measure. This work was motivated by an application to the statistical analysis of molecular dynamics simulations with microscopically reversible laws. 

Bacallado also introduces in \cite{bacallado2} priors on the variable-order Markov chains, again through a particular case of \sERRW, which provide a useful, parsimonious alternative to a higher-order Markov model. Consider a random walk $(X_n)_{n\in\mathbb N}$ on the de Bruijn graph; we say it is of variable-order with context set $\mathcal C\subseteq S\cup S^2\cup\dots\cup S^k$, if for each $(i_1,\dots,i_\ell)\in\mathcal C$, the transition probabilities out of $x$ and $y$ are the same whenever $x$ and $y$ both end in $(i_1,\dots,i_\ell)$. Bacallado \cite{bacallado2} shows that if $(X_n)_{n\in\mathbb N}$ is reversible in the higher-order sense, then $x\in \mathcal C$ implies that $x^*\in \mathcal C$ and any $y$ which has $x$ as a prefix is also in $\mathcal C$. This fact is used to define a prior with full support on the space of variable-order, reversible Markov chains with a specific context set. The prior shares many of the computational advantages of the ERRW prior of Diaconis and Rolles \cite{diaconis-rolles}.

Another example of $\star$-ERRW is the reinforced random walk with amnesia of \cite{bacallado2016bayesian}, which defines a prior for reversible processes with long memory. A random walk with amnesia is a random walk on the graph $\ggg=(V,E)$ defined by $V=S\cup S^2\cup\dots S^k$ with edges from $(i_1,\dots,i_m)\in V$ leading to $(i_2,\dots,i_m)$, if $m>1$, as well as to $(i_1,\dots,i_m,j)$ for each $j\in V$, if $m<k$. The first class of edge ``forgets'' the most distant element in $S$ in the history, while the latter appends a new element of $S$ to the history. A $\star$-ERRW on this graph defines a mixture of random walks with amnesia, which can be used in a Bayesian analysis to learn a context-dependent, stochastic length of memory under the assumption of reversibility, avoiding the need to specify a context set a priori.
 }

\subsection{The $\star$-Vertex Reinforced Jump Process ($\star$-VRJP)} 
Similarly as for the ERRW, the \sERRW can be represented in terms of a $\star$-Vertex-Reinforced Jump Process ($\star$-VRJP) with independent gamma conductances. 

More precisely, suppose we are given some weights $(W_{i,j})_{(i,j)\in E}$ invariant by the involution, that is, such that 
$$
W_{i,j}=W_{j^*,i^*} \text{ for all }(i,j)\in E.
$$

If $(u_i)_{i\in V}\in \R^V$, then let $u^*=(u_{i^*})_{i\in V}\in \R^V$, and define $W^u\in \R^E$ by
$$
W^u_{i,j}= W_{i,j}e^{u_i+u_{j^*}}.
$$
Remark that $W^u_{i,j}= W^u_{j^*,i^*}$.

Consider the process $(X_t)_{t\ge 0}$ which, depending on the past at time $t$, jumps from $i$ to $j$ at a rate
$$
W^{T(t)}_{i,j}=W_{i,j}e^{T_i(t)+T_{j^*}(t)},
$$
where $T(t)$ is the local time of $X$ at time $t$
$$
T_i(t)=\int_0^t \indic_{X_u =i} du.
$$
We call this process the
$\star$-VRJP. 
We note by $\P^W_{i_0}$ the law of the \sVRJP starting from the point $i_0$. We always consider that the \sVRJP is defined on the canonical space ${\mathcal{D}}([0,\infty),V)$ of c\`adl\`ag functions from $[0,\infty)$ to  $V$ and $(X_t)_{t\ge 0}$ will denote the canonical process on ${\mathcal{D}}([0,\infty),V)$.
The \sERRW can be viewed as a generalization of the VRJP since when $\star$ is the identity, it matches the definition of the VRJP in exponential time scale, see \cite{sabot-tarres} Lemma~1.

\begin{lemma}\label{lem_ERRW-VRJP}
Let $(\al_{i,j})_{(i,j)\in E}$ be a set of positive weights on the edges such that $\al_{i,j}=\al_{j^*,i^*}$.
Let $(W_e)_{e\in \tilde E}$ be independent Gamma random variables with parameters $\al_{i,j}$. Let $(X_n)_{n\in\N}$ be
the discrete time process describing the successive jumps of $(X_t)_{t\ge 0}$ and let $\P_{i_0}(\cdot )= \E\left(\P^W_{i_0}(\cdot )\right) $ be the annealed
law obtained after taking expectation with respect to the r.v. $W$. Then, under the annealed law $\P_{i_0}$,
$(X_n)_{n\ge 0}$ has the law of the $\star$-ERRW defined in the first part.
\end{lemma}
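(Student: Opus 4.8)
The plan is to check that, under the annealed law $\P_{i_0}$, the jump chain $(X_n)_{n\ge 0}$ and the \sERRW assign the same probability to every finite nearest-neighbour path started at $i_0$; since finite-dimensional marginals determine the law of a discrete-time $V$-valued process, this is enough. Fix a path $\gamma=(i_0,\dots,i_n)$; its \sERRW probability is the product of the one-step transition probabilities, $\prod_{k=0}^{n-1}\alpha_{i_k,i_{k+1}}(k)\big/\big(\sum_{l:\,i_k\to l}\alpha_{i_k,l}(k)\big)$, so the aim is to show $\E\big[\P^W_{i_0}(X_0=i_0,\dots,X_n=i_n)\big]$ equals this.

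Conditionally on $W$, the pair (jump chain, holding times) of $(X_t)_{t\ge 0}$ is that of a time-inhomogeneous Markov jump process with rates $W^{T(t)}_{i,j}$, so $\P^W_{i_0}(X_0=i_0,\dots,X_n=i_n)$ is the integral over $(\tau_0,\dots,\tau_{n-1})\in(0,\infty)^n$ of $\prod_{k=0}^{n-1}W_{i_k,i_{k+1}}e^{L_k}\cdot\exp\!\big(-\sum_{k}\int_{\sigma_k}^{\sigma_{k+1}}R_k(s)\,ds\big)$, where $\sigma_k=\tau_0+\dots+\tau_{k-1}$, $L_k=T_{i_k}(\sigma_{k+1})+T_{i_{k+1}^*}(\sigma_{k+1})$ and $R_k(s)=\sum_{l:\,i_k\to l}W_{i_k,l}e^{T_{i_k}(s)+T_{l^*}(s)}$. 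Since the local times $T(\cdot)$ do not involve $W$, the exponent is a \emph{linear} form $\sum_{e\in\tilde E}W_e\,A_e(\tau)$ in the conductances, while $\prod_k W_{i_k,i_{k+1}}=\prod_{e\in\tilde E}W_e^{\,C_e(\gamma)}$, where $C_e(\gamma)$ counts the steps of $\gamma$ in the class $e$ --- here $W_{i,j}=W_{j^*,i^*}$ is what makes a step along $(i,j)$ and a step along $(j^*,i^*)$ both count towards $C_e$. Integrating out the independent $W_e\sim\Gamma(\alpha_e,1)$ first, via $\E[W_e^{\,m}e^{-W_eA}]=\frac{\Gamma(\alpha_e+m)}{\Gamma(\alpha_e)}(1+A)^{-(\alpha_e+m)}$, yields
\[
\E\big[\P^W_{i_0}(X_0=i_0,\dots,X_n=i_n)\big]=\prod_{e\in\tilde E}\frac{\Gamma(\alpha_e+C_e(\gamma))}{\Gamma(\alpha_e)}\int_{(0,\infty)^n}\Big(\prod_{k=0}^{n-1}e^{L_k}\Big)\prod_{e\in\tilde E}\big(1+A_e(\tau)\big)^{-(\alpha_e+C_e(\gamma))}\,d\tau .
\]
By $\alpha_{i,j}(m)=\alpha_{i,j}+N_{i,j}(m)+N_{j^*,i^*}(m)$ the prefactor is exactly the numerator $\prod_k\alpha_{i_k,i_{k+1}}(k)$ of the \sERRW path probability, so it remains to show the integral equals $\big(\prod_{k}\sum_{l:\,i_k\to l}\alpha_{i_k,l}(k)\big)^{-1}$.

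Here the exponential dependence of the rates on the local times is used: a direct computation of the $A_e$ (repeatedly using $\int_0^\tau e^{a+s}\,ds=e^a(e^\tau-1)$) shows that the contributions telescope, so that each $1+A_e(\tau)$ is a pure exponential $e^{\ell_e(\tau)}$ with $\ell_e$ linear in the $\tau_m$ with nonnegative integer coefficients; combined with $\prod_k e^{L_k}$, the total coefficient of each $\tau_m$ in the integrand's exponent comes out to be $-\sum_{l:\,i_m\to l}\alpha_{i_m,l}(m)$, so the integral factorises as $\prod_{m=0}^{n-1}\int_0^\infty e^{-(\sum_l\alpha_{i_m,l}(m))\,\tau_m}\,d\tau_m=\prod_m\big(\sum_{l:\,i_m\to l}\alpha_{i_m,l}(m)\big)^{-1}$. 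Multiplying by the prefactor gives the \sERRW path probability, and matching finite-dimensional distributions proves the lemma.

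The bookkeeping in this last step --- the $\star$-version of the classical VRJP--ERRW computation --- is where I expect the bulk of the work, in particular at \emph{self-dual edges} $(i,i^*)$, which do occur (e.g. through palindromic vertices in the de Bruijn example). There a step $i\to i^*$ has $i_{k+1}^*=i_k$, so the rate $W_{i,i^*}e^{2T_i(t)}$ and the matching term of $R_k$ are quadratic in the local time at $i$, and the telescoping produces a $\tfrac12(e^{2\tau}-1)$ in place of $e^\tau-1$; reconciling this with the \emph{doubled} reinforcement $\alpha_{i,i^*}(n)=\alpha_{i,i^*}+2N_{i,i^*}(n)$ forces a matching convention on the law of the self-dual conductance (shape $\alpha_e/2$, with a factor $2$ in the rate), and checking that the exponent bookkeeping of the previous paragraph still closes in this case is the delicate point. (If $G$ is infinite, one first runs the argument on finite subgraphs containing $\gamma\cup\gamma^*$.)
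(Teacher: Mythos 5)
Your proposal is correct and takes essentially the same route as the paper, whose proof of this lemma is a one-line reference to the analogous ERRW--VRJP computations (Lemma 4.7 of Tarr\`es and Theorem 1 of Sabot--Tarr\`es): condition on $W$, write the path density, integrate out the independent Gamma conductances, and use the telescoping of the local-time exponentials so that the time integral factorises into $\prod_m(\sum_l\al_{i_m,l}(m))^{-1}$. The self-dual-edge point you flag is genuine --- the statement is silent on the parametrisation of $W_e$ when $(i,j)=(j^*,i^*)$, in which case $1+A_e=\tfrac12(1+e^{2T_i})$ is not a pure exponential under the naive convention --- and the convention you propose (shape $\al_{i,i^*}/2$ with a factor $2$ in the rate) is exactly the one under which both the Gamma prefactor $\prod_{r=0}^{m-1}(\al_e+2r)$ matches the doubled reinforcement and the exponent bookkeeping closes.
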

\begin{proof}
The proof is similar to that of Lemma 4.7 in \cite{tarres3} and Theorem 1 in \cite{sabot-tarres}.
\end{proof}

The \sVRJP is not partially exchangeable in the sense of Diaconis and Freedman \cite{diaconis-freedman}, see \cite{starvrjp1}. This leads to several new difficulties and phenomena.
The \sVRJP is analysed in separate papers by Sabot and Tarr\`es \cite{starvrjp1,starvrjp2}: after a suitable randomization of the initial local times, the \sVRJP becomes partially exchangeable. After that randomization, the limiting measure is computed and a random Schr\"{o}dinger operator representation is obtained, extending \cite{sabot-tarres,STZ17} in the \sVRJP case. The non-randomized \sVRJP can also be written as a mixture of conditioned Markov jump processes, after a proper time change.


\section{Statement of the results}
\subsection{Partial exchangeability of the $\star$-ERRW under assumptions on initial weights $\al$ and limiting manifold.}
\label{exch}
We use the notation that, for all $e=(i,j)\in E$, then $\underline e=i$ and $\overline e=j$. Let $\delta$ denote the delta Dirac function.

{\red
For all $i_0\in V$, $\al=(\al_{i,j})_{(i,j)\in \tilde E}\in(0,\infty)^{\tilde E}$, let 
\begin{equation}\label{gamma}
\gamma(i_0,\al)= {{\prod_{(i,j)\in \tilde E} \Gamma(\al_{i,j})}\over {\left( \prod_{i\in V_0} \Gamma( \demi( \al_i+1-\delta_{i_0}(i))2^{\demi (\al_i-\delta_{i_0}(i))}\right) 
\left( \prod_{i\in V_1} \Gamma( \min(\al_i, \al_{i^*}))  \right)}}.
\end{equation}
}

The divergence operator is defined as the operator $\dive: \R^E\to \R^V$ defined for a function on the edges $(x_e)_{e\in E}$ by
$$
\dive(x)(i)= \sum_{e, \underline e=i} x_e-\sum_{e, \overline e=i} x_e.
$$

For all $y\in\R^{E}$ let, for all $n\in\N$ and $i\in V$, 
$$y_i=\sum_{j:\,i\to j}y_{i,j}.$$

\begin{proposition}
\label{bac}
Let $i_0\in V$ be the starting point of the \sERRW and suppose that the weights $(\alpha_e)_{e\in E}$ have the following property
\begin{eqnarray}\label{divergence}
\dive(\alpha)=\delta_{i_0^*}-\delta_{i_0}.
\end{eqnarray}
Then the \sERRW $(X_n)$ is partially exchangeable in the sense of \cite{diaconis-freedman}.

{\blue
More precisely, if $\sigma=(\sigma_0, \ldots, \sigma_n)$ is a directed path in $\ggg$, we set $\alpha_e(\sigma)=\al_e+\overline n_e(\sigma)$, where $\overline n_{(i,j)}(\sigma)$ is the total number of crossings of oriented edges $(i,j)$ and $(j^*,i^*)$ along the path $\sigma$. Then
\begin{align*}
&\Pb(X_0=\sigma_0,\ldots,X_n=\sigma_n)
=\frac{\g(\sigma_n,\alpha(\sigma))}{\g(\sigma_0,\al)}.
\end{align*}
}
\end{proposition}
\begin{remark}\label{RWDE}
Let us come back to the special case described in the introduction, where $\ggg$ is obtained from a directed graph $\ggg_1=(V_1,E_1)$ glued to $\ggg_1^*=(V_1^*,E_1^*)$  by the starting point $i_0$, where $V_1^*$ is a copy of $V_1$ and $E_1^*$ is the set of edges $(j^*,i^*)$ for $(i,j)\in E$. Then $i_0=i_0^*$ and the condition for exchangeability of the \sERRW is $\hbox{div}(\alpha)=0$, which is the same as the condition introduced in \cite{sabot1} for the key property of statistical invariance by time reversal for Random Walks in Dirichlet Environments (RWDE). In fact, for this special graph, the \sERRW makes loops based at $i_0$, either in the forward direction in $\ggg_1$ or in the backward direction in $\ggg_1^*$. The partial exchangeability essentially is a direct consequence of the obervation that forward/backward choices do not count in the probability of paths, similarly as in the proof of statistical invariance by time reversal of the RWDE (see proof of Proposition~1 in \cite{Sabot-Tournier10}).  
\end{remark}

Set
\begin{eqnarray}
\label{hhh}
&\hhh= \{ (x_e)\in \R^E, \; x_{i,j}=x_{j^*,i^*}\}\simeq \R^{\tilde E}, \\
\label{lll0}
&\lll_0=\{x=(x_e)_{e\in E} \in \hhh, \hbox{s.t. $\hbox{div}(x)=0$ and $\sum_{e\in \tilde E} x_e=0$}\},
\\
\label{lll1}
&\lll_1=\{x=(x_e)_{e\in E}\in \hhh\cap(0,\infty)^E, \hbox{ s.t. $\hbox{div}(x)=0$ and $\sum_{e\in \tilde E} x_e=1$}\}.
\end{eqnarray}
As shown in the lemma below, $\lll_1$ is the open set of possible limits of the asymptotic occupation measure of the edges, and later $\lll_0$ will appear as the tangent space of $\lll_1$.
\begin{corollary}\label{cor1}
Under assumption \eqref{divergence}, the limit
$$
\lim_{n\to \infty} {1\over  n} N_{i,j}(n)= Y_{i,j}
$$
exists a.s. and $(Y_{i,j})_{(i,j)\in E}\in\lll_1$.
\end{corollary}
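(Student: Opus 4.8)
The plan is to combine the partial exchangeability established in Proposition \ref{bac} with the de Finetti–style representation of Diaconis and Freedman \cite{diaconis-freedman}, together with the explicit conservation law derived in the proof of Proposition \ref{bac}. First I would invoke the theorem of Diaconis and Freedman: since $(X_n)$ is partially exchangeable and (by the strong connectivity assumption on $G$) recurrent in the sense that it returns to $i_0$ infinitely often, it is a mixture of Markov chains. Concretely, there is a random transition kernel $P$ on $V$, measurable with respect to the tail $\sigma$-field, such that conditionally on $P$ the process $(X_n)$ is a Markov chain with kernel $P$ started at $i_0$. By the ergodic theorem for (the conditioned, irreducible) Markov chain, for each edge $(i,j)$ the empirical edge frequency $\frac1n N_{i,j}(n)$ converges a.s.\ to $\pi(i)P(i,j)=:Y_{i,j}$, where $\pi$ is the invariant probability of $P$ on the recurrence class; this gives existence of the limit and shows $(Y_{i,j})\in(0,\iy)^E$ and $\sum_E Y_{i,j}=1$, i.e.\ the normalization constraint.

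Next I would check the two remaining structural constraints, namely $Y\in\hhh$ (the $\star$-symmetry $Y_{i,j}=Y_{j^*,i^*}$) and $Y\in\ddd$ ($\dive(Y)=0$), which together with the normalization place $Y$ in $\lll_1$. For the divergence-free property, the point is that edge frequencies are automatically approximately balanced at every vertex: $\dive(N(\cdot)/n)(i)=\frac1n(\1_{\{X_0=i\}}-\1_{\{X_n=i\}})$, which tends to $0$ as $n\to\infty$; hence $\dive(Y)(i)=0$ for every $i$. For the $\star$-symmetry, I would use the identity recorded in the proof of Proposition \ref{bac}, namely that $\al_i^\leftrightarrow(n)=\al_{i^*}^\leftrightarrow(n)$ and more precisely the relation $\al_i^\rightarrow(n)-\al_i^\leftarrow(n)+\1_{\{X_n=i\}}-\1_{\{X_n=i^*\}}=\al_i^\rightarrow(0)-\al_i^\leftarrow(0)+\1_{\{X_0=i\}}-\1_{\{X_0=i^*\}}$. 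The deeper symmetry $N_{i,j}(n)=N_{j^*,i^*}(n)+O(1)$ should follow from the fact that $\al_{i,j}(n)=\al_{i,j}+N_{i,j}(n)+N_{j^*,i^*}(n)$ is the quantity actually driving the dynamics, so the process does not distinguish the two oriented edges $(i,j)$ and $(j^*,i^*)$; dividing by $n$ and passing to the limit yields $Y_{i,j}=Y_{j^*,i^*}$. Alternatively, one can deduce the $\star$-symmetry directly from the form of the mixing measure: the de Finetti kernel $P$ itself inherits, from the explicit path probability formula at the end of the proof of Proposition \ref{bac} (which is manifestly a function of the $\star$-symmetrized crossing numbers $N_{ij}(\sigma)$ on $\tilde E$ only), the reversibility-type relation $\pi(i)P(i,j)=\pi(j^*)P(j^*,i^*)$, and hence $Y\in\hhh$.

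The main obstacle I anticipate is the rigorous justification of recurrence and of the ergodic averaging, i.e.\ making sure the Diaconis–Freedman machinery applies and that the limiting Markov chain has a genuine invariant probability under which the edge frequencies converge to strictly positive numbers. This requires ruling out transience (escape to "infinity" — here a non-issue on a finite graph, but one must still argue the walk is not eventually absorbed, using the positivity of all initial weights and the connectivity hypothesis that any two vertices are joined by a directed path up to $\star$) and identifying the recurrence class; once $(X_n)$ visits $i_0$ infinitely often, partial exchangeability of the successive excursions gives, via the standard de Finetti argument, that the excursions are exchangeable, hence i.i.d.\ conditionally on the tail, and the strong law of large numbers for the i.i.d.\ excursion edge-counts delivers both the a.s.\ convergence and the formula $Y_{i,j}=\e[\text{crossings of }(i,j)\text{ in one excursion}\mid \text{tail}]/\e[\text{excursion length}\mid\text{tail}]$. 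The positivity $Y_{i,j}>0$ for every $(i,j)\in E$ then follows because, conditionally on the tail, the Markov kernel $P$ still charges every edge of $G$ (the $\star$-ERRW transition probabilities are bounded below along any finite history since the weights $\al_{i,j}(n)$ are always positive), so every edge is crossed with positive probability in an excursion.
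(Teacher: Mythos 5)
Most of your argument tracks the paper's proof: existence of the limit from partial exchangeability via Diaconis--Freedman, $\dive(Y)=0$ from the telescoping identity $\dive(N(n))(i)=\1_{\{X_0=i\}}-\1_{\{X_n=i\}}$, the normalization from $\sum_{E} N_e(n)=n$, and positivity from irreducibility of the mixing kernel. The one genuinely nontrivial point of the corollary is the $\star$-symmetry $Y_{i,j}=Y_{j^*,i^*}$, and that is exactly where your proposal has a gap. Your primary argument rests on the claim $N_{i,j}(n)=N_{j^*,i^*}(n)+O(1)$, which is not a pathwise identity and does not follow from the observation that only the sum $N_{i,j}(n)+N_{j^*,i^*}(n)$ enters the reinforcement: that observation says the dynamics is blind to how the crossings are split between the two directed edges (which, when $i\neq i^*$ and $j\neq j^*$, need not even share a vertex), not that the split equalizes along almost every trajectory. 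Your alternative route via the symmetry of the path-probability formula could in principle be developed (it amounts to a Yaglom-reversibility statement comparing a path with its $\star$-reversal), but as written it is only a sketch, and the reversed path starts at $X_n^*$ rather than $i_0$, so the comparison of path probabilities is not immediate.

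The paper closes this gap differently, by identifying the limiting transition kernel in two ways: on one hand, by Diaconis--Freedman, conditionally on the tail the process is a Markov chain with kernel $P(i,j)=Y_{i,j}/Y_i$; on the other hand, the actual conditional one-step probabilities are $\al_{i,j}(n)/\al_i(n)$, which converge a.s.\ to $(Y_{i,j}+Y_{j^*,i^*})/(2Y_i)$ because $\al_{i,j}(n)=\al_{i,j}+N_{i,j}(n)+N_{j^*,i^*}(n)$ and $N(n)/n\to Y$. A conditional law of large numbers (the empirical transition frequencies out of $i$ must agree with the Ces\`aro limit of the conditional one-step probabilities) forces these two limits to coincide, whence $2Y_{i,j}=Y_{i,j}+Y_{j^*,i^*}$, i.e.\ $Y_{i,j}=Y_{j^*,i^*}>0$. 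You should replace the $O(1)$ claim by this identification; the remainder of your write-up is essentially the paper's argument.
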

\subsection{Mixing measure of the \sERRW}
{\red
In the sequel we will always suppose that the graph is strongly connected, i.e. that for any vertices $i$ and $j$, there exists a directed path in the graph between $i$ and $j$.
}

\begin{definition}[Reference measure on $\lll_0$ and $\lll_1$]
\label{lem_bases}
By a slight abuse of notation, we will call basis of the subset $\lll_0$ a maximal subset $B\subset \tilde E$
such that the family  $y_e$, $e\in B$ are independent variables (i.e. linear forms) in $\lll_0$. Note that $\lll_1$ is an affine space directed by $\lll_0$.  In the sequel we will use as reference measure on $\lll_0$ and $\lll_1$
$$
dy_{\lll_0} = \prod_{e\in B} dy_e,\,\,\,dy_{\lll_1} = \prod_{e\in B} dy_e
$$ 
the measures obtained from the basis $B$. In Appendix \ref{bases} we describe the bases of $\lll_0$ and show that
$dy_{\lll_0} $ is independent of the choice of basis $B$.
\end{definition}
\begin{remark}
By $ \prod_{e\in B} dy_e$ we mean the pull-back of the Lebesgue measure on $\R^B$ obtained by the projection $(y_e)\in \lll_0\mapsto (y_e)_{e\in B}$. The measure $dy_{\lll_0}$ differs, by a universal constant, from the volume measure on $\lll_0$ induced by the Euclidean metric on $\R^{\tilde E}$.
\end{remark}

\begin{theorem}
\label{main}
{\bf(i)} The random variable $Y$ has the following distribution on $\lll_1$
$$
\mu_{i_0}^\al(dy) = C {\red \gamma(i_0, \al)^{-1} }\sqrt{y_{i_0}} \left({\prod_{(i,j)\in \tilde E} y_{i,j}^{\al_{i,j}-1}\over 
\prod_{i\in V} y_i^{\demi \al_i}} \right){1\over \prod_{i\in V_0} \sqrt{y_i}} \sqrt{ D(y)} \,dy_{\lll_1},
$$
where $\gamma(i_0, \al)$ is defined in \eqref{gamma} and
\begin{align*}
C&= {2\over \sqrt{2\pi}^{\vert V_0\vert -1} \sqrt{2}^{\vert V_0\vert +\vert V_1\vert}},\,\,D(y)=\sum_{T} \prod_{(i,j)\in T} y_{i,j}.
\end{align*}
The last sum runs on spanning trees directed towards a root $j_0\in V$, and the value does not depend on the choice of the
root $j_0$.

{\bf (ii)} The process $X_n$ is a mixture of {\blue Yaglom reversible} Markov chains with transition probabilities $p_{i,j}={y_{i,j}\over y_i }$, and invariant measure 
$y_i$ where $(y_{i,j})$ is distributed according to $\mu_{i_0}^a$.
\end{theorem}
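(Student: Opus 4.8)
\medskip\noindent\emph{Strategy of proof.}
The plan is to recognise $\mu_{i_0}^\al$ as the de Finetti mixing measure of the walk. By Proposition~\ref{bac} the $\star$-ERRW is partially exchangeable, hence by Diaconis--Freedman \cite{diaconis-freedman} a mixture of Markov chains, and by Corollary~\ref{cor1} the mixing variable is the a.s.\ limit $Y\in\lll_1$ of the empirical edge frequencies, with kernel $p_{i,j}=Y_{i,j}/Y_i$. Granting (i), part (ii) is then immediate: it is precisely this description of the mixture, together with the fact that $\dive(Y)=0$ on $\lll_1$ reads $\sum_i Y_i p_{i,j}=Y_j$, i.e.\ that $(Y_i)$ is invariant for $(p_{i,j})$. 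So the entire content is the explicit density in (i).

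Because the mixing measure of a partially exchangeable process is determined by the law of the process, it suffices to show that the measure $\mu_{i_0}^\al$ of the statement is a probability measure reproducing every finite path probability: for each finite path $\sigma$ from $i_0$, with directed crossing numbers $(N_{ij}(\sigma))_{(i,j)\in E}$,
$$
\int_{\lll_1}\prod_{(i,j)\in E}\Bigl(\frac{y_{i,j}}{y_i}\Bigr)^{N_{ij}(\sigma)}\mu_{i_0}^\al(dy)=\P_{i_0}^{\star-\mathrm{ERRW}}(X\text{ follows }\sigma),
$$
the right-hand side being the explicit ratio of $f_1$'s and $f_2$'s from the proof of Proposition~\ref{bac} (the empty path giving $\mu_{i_0}^\al(\lll_1)=1$). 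Since both sides depend on $\sigma$ only through the crossing numbers, and these range over the nonnegative integer flows $\mathbf n$ with $\dive(\mathbf n)=\de_{i_0}-\de_{j}$ ($j\in V$) carried by a connected path, one folds the two orientations of each edge of $\tilde E$ via $y_{i,j}=y_{j^*,i^*}$ and reduces the problem to showing that, for all such $\mathbf n$,
$$
C\,\gamma(i_0,\al)\int_{\lll_1}\sqrt{y_{i_0}}\;\frac{\prod_{(i,j)\in\tilde E}y_{i,j}^{\al_{i,j}+N_{ij}(\sigma)+N_{j^*,i^*}(\sigma)-1}}{\prod_{i\in V}y_i^{\frac12\al_i+n_i}}\;\frac{\sqrt{D(y)}}{\prod_{i\in V_0}\sqrt{y_i}}\,dy_{\lll_1}
$$
agrees with the Proposition~\ref{bac} formula, $n_i$ being the number of exits from $i$ along $\sigma$.

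The technical heart is this constrained integral over the affine slice $\lll_1$. First I would use the matrix--tree theorem: for $y$ with $\dive(y)=0$, the polynomial $D(y)=\sum_T\prod_{(i,j)\in T}y_{i,j}$ is a first principal minor of the weighted graph Laplacian $L(y)$, and it is root-independent because $\dive(y)=0$ makes the row and column sums of $L(y)$ both vanish, forcing all such minors to coincide. Thanks to the relation $y_{i,j}=y_{j^*,i^*}$ the quadratic form attached to $L(y)$ is positive and symmetric on the relevant $\star$-symmetric subspace of fields, so $\sqrt{D(y)}$ can be written as a Gaussian integral $\Cst\int\ee^{-\frac12\langle\xi,L(y)\xi\rangle}\,d\xi$ compatible with the $\star$-symmetry; this is the discrete Feynman--Kac step, and the source of the powers of $\sqrt{2\pi}$ in $C$. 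Substituting this representation and exchanging the two integrations, the integral over $\lll_1$ decouples --- after parametrising $\lll_1$ by a basis $B$ of $\lll_0$ as in Appendix~\ref{bases} and eliminating the constraint $\dive(y)=0$ --- into one-dimensional Gamma/Beta integrals, one per edge of $\tilde E$ and one per vertex, which assemble into $\prod_{(i,j)\in\tilde E}f_1(\al_{i,j},\cdot)$ over $\prod_{i\in V_0}f_2(\be_i,\cdot)\prod_{i\in V_1}f_1(\be_i,\cdot)$, together with the Gamma prefactors forming $\gamma(i_0,\al)$; the leftover Gaussian integration supplies the remaining numerical constant. Matching with Proposition~\ref{bac} then yields the identity for every admissible $\mathbf n$, hence $\mu_{i_0}^\al=\mathcal L(Y)$, and (ii) follows.

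The main obstacle is the exact bookkeeping of constants in this last step. One must track the two roles of the vertices --- a fixed point $i\in V_0$ contributes a step-$2$ factor $f_2(\be_i,\cdot)$ together with $\Gamma(\frac12(\al_i+1-\indic_{i=i_0}))$ and $2^{\frac12(\al_i-\indic_{i=i_0})}$, while $i\in V_1$ contributes $f_1(\be_i,\cdot)$ and $\Gamma(\inf(\al_i,\al_{i^*}))$ --- carry the asymmetry at the starting point encoded by the extra $\sqrt{y_{i_0}}$ and by the $\indic_{i=i_0}$-shifts forced by \eqref{divergence}, and pin down the exact dimension of the Gaussian integral, which fixes the power $|V_0|-1$ of $\sqrt{2\pi}$ (after grounding) and, with the Beta normalisations, the power $|V_0|+|V_1|$ of $\sqrt2$ in $C$. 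As a consistency check, specialising to $\star=\mathrm{id}$ (so $V=V_0$, $V_1=\emptyset$) recovers the Coppersmith--Diaconis magic formula \cite{coppersmith}, for which the Gaussian normalisation is much simpler to evaluate.
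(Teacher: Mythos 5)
Your high-level framing (identify $\mu_{i_0}^\al$ as the de Finetti mixing measure, reduce to matching all finite path probabilities, deduce (ii) from $\dive(Y)=0$) is sound and consistent with Proposition~\ref{bac} and Corollary~\ref{cor1}, but it is a different route from the paper, which never computes the moment integrals directly: instead it shows that $\Psi(k,\al)(\varphi)=\int\varphi\,d\mu_k^\al$ is harmonic for the generator of the Markov process $(X_n,\al(n))$ (Lemma~\ref{fk}), so that $\Psi(X_n,\al(n))(\varphi)$ is a martingale, and then identifies its a.s.\ limit with $\varphi(Y)$ via a Laplace expansion (Proposition~\ref{Gaussian-limit}) whose Gaussian normalisation is computed in Lemma~\ref{Calcul-Q}. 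The point of that detour is precisely to avoid the computation you place at the ``technical heart'' of your argument, which the paper flags as the difficulty: even the statement $\mu_{i_0}^\al(\lll_1)=1$ (your empty-path case) is not accessible by direct integration.

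There are two concrete gaps in your sketch of that computation. First, the proposed representation $\sqrt{D(y)}=\Cst\int \ee^{-\frac12\langle\xi,L(y)\xi\rangle}\,d\xi$ is backwards: a Gaussian integral over a subspace on which $L(y)$ is positive produces $\det^{-1/2}$, i.e.\ a constant times $1/\sqrt{D(y)}$, not $\sqrt{D(y)}$. (This is exactly how the determinant enters the paper: Lemma~\ref{Calcul-Q} gives $\int_{\lll_0}\ee^{-\frac14 Q_w(z)}(dz)_{\lll_0}\propto 1/\sqrt{D(w)}$, and the factor $\sqrt{D(y)}$ sitting in the density is what cancels it in the limit.) To put $\sqrt{D(y)}$ \emph{inside} an integral representation you would need something of a different nature (a Grassmann/Majorana integral, or the matrix--tree expansion of $D$ followed by an interchange of summation and integration), none of which is indicated. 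Second, even granting some such representation, the claim that the integral over $\lll_1$ ``decouples into one-dimensional Gamma/Beta integrals, one per edge and one per vertex'' is asserted, not argued: the variables $(y_{i,j})$ on the slice $\lll_1$ are coupled by the constraints $\dive(y)=0$, $\sum y_e=1$, by the denominators $y_i=\sum_j y_{i,j}$, and by $D(y)$ itself, and no parametrisation is exhibited in which the integrand factorises. Since this single step carries the entire quantitative content of the theorem --- all of $\gamma(i_0,\al)$, $C$, and the powers of $\sqrt{2}$ and $\sqrt{2\pi}$ --- the proposal as written does not constitute a proof; it reduces the theorem to an unproved integral identity that is at least as hard as the theorem itself.
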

\begin{remark}
Note that, as in the (reversible) case of Edge-Reinforced Random Walk (ERRW), the property that the integral of the measure $\mu_{i_0}^\al(dy)$ in Theorem \ref{main} is $1$ is not easy to prove directly. Even the apparently simple fact that the measure is finite is not immediately obvious.
\end{remark}
\begin{remark}
The sum on spanning trees  $D(y)$ can also be expressed as a determinant : 
$$
D(y)= \det(M(y))_{V\setminus\{j_0\}}
$$
where $\det(M(y))_{V\setminus\{j_0\}}$ is the diagonal minor obtained after removing line and column $j_0$ of the matrix $M(y)$ defined by 
$$
M(y)_{i,j}=\left\{\begin{array}{ll} y_{i,j}, &\hbox{ if $i\neq j$}
\\
-y_i, &\hbox{ if $i=j$}
\end{array}\right.
$$
The value of this determinant does not depend on the choice of $j_0$ since the sums on any line or column of $M(y)$ are null.
\end{remark}
\begin{remark}
Compared to the "magic formula" for ERRW in Coppersmith, Diaconis, \cite{coppersmith,diaconis-rolles,keane-rolles1}, the determinantal term $\sqrt{D(y)}$ is the same but it is now non symmetric. 
Also the measure is restricted to the limiting subspace $\lll_1$.
\end{remark}
\begin{remark}
One can check that the measure indeed coincides with the magic formula in the case of classic ERRW (cf \cite{coppersmith,diaconis-rolles,keane-rolles1}). When the graph is the disconnected union $(V_1\sqcup V_1^*,E_1\sqcup E_1^*)$ described at the end of Section \ref{serrw} we can easily confirm that it
coincides with the measure on the occupation time of the edges of Random Walks in Random Dirichlet Environment computed in lemma 2 \cite{sabot1} (ArXiv version).
\end{remark}
\section{Proof of the results of section \ref{exch}}
\begin{proof}[Proof of Proposition~\ref{bac}.]
{\blue
For $k\le n$, we denote by $\overline n_{(i,j)}(\sigma, k)$ the number of crossings of crossings of oriented edges $(i,j)$ and $(j^*,i^*)$ along the path $\sigma$ up to time $k$. Following \eqref{alphan}, we set $\alpha_{e}(\sigma, k)=\alpha_e+\overline n_{(i,j)}(\sigma, k)$, so that $\alpha_{(i,j)}(\sigma, n)=\alpha_{(i,j)}(\sigma)$. We also write $\alpha_i(\sigma,k)=\sum_{j,i\to j}\alpha_{(i,j)}(\sigma, k)$. Then, by construction,
$$\Pb(X_0=\sigma_0,\ldots,X_n=\sigma_n)=
\frac{\prod\limits_{e\in \tilde{E}} \Big(\prod\limits_{1\le k\le n:\,(\sigma_{k-1},\sigma_k)=e,e^*} \al_e(\sigma,k)\Big)}{\prod\limits_{i\in V}\Big(\prod\limits_{0\le k\le n-1:\,\sigma_k=i}\al_i(\sigma,k)\Big)}.
$$
Now, for all $e\in \tilde{E}$, $\al_e(\sigma,k)$ increases at each crossing of $e$ or $e^*$, thus
$$\prod\limits_{1\le k\le n:\,(\sigma_{k-1},\sigma_k)=e,e^*}\al_e(\sigma,k)
=\al_e(\al_e+1)\ldots(\al_e+\overline n_e(\sigma)-1)=\frac{\G(\alpha_e(\sigma))}{\G(\al_e)}.$$
Besides, for all $i\in V_0$, $\al_i(\sigma,k)$ increases by two between two successive visits to $i$: indeed, the weight of an edge $(i,j)$ is increased at the exit of $i$, and the weight of an edge $(j,i)$ is increased at the return to $i$, hence increasing also the weight of the edge $(i,j^*)$ since $i^*=i$. Besides, $\alpha_i(\sigma,k)$ has value $\al_i+1-\delta_{i_0}(i)$ at its first visit to $i$ and value $\al_i(\sigma)-1-\delta_{\sigma_n}(i)$ at the last visit before time $n-1$. Thus, 
\begin{align*}
&\prod\limits_{i\in V_0}\Big(\prod\limits_{0\le k\le n-1:\,\sigma_k=i}\al_i(\sigma,k)\Big)\\
=&(\al_i+1-\delta_{i_0}(i)) ((\al_i+3-\delta_{i_0}(i))\ldots(\al_i(\sigma)-2+(1-\delta_{i_n}(i)))\\
=&\frac{\Gamma( \demi( \alpha_i(\sigma)+1-\delta_{i_n}(i))2^{\demi (\alpha_i(\sigma)-\delta_{i_n}(i))}}{\Gamma( \demi( \al_i+1-\delta_{i_0}(i))2^{\demi (\al_i-\delta_{i_0}(i))}}
\end{align*}
Finally, we can compute the terms in the denominator for $i\in V_1$ in a similar manner. Indeed, note that $\dive((\alpha_e(\sigma,k)))(i)=\alpha_i(\sigma,k)-\alpha_{i^*}(\sigma,k)$ 
for all vertex $i\in V$, and that, by induction, the property \eqref{divergence} holds for all $k\in\N$:
$$\dive((\alpha_e(\sigma,k)))=\delta_{\sigma_k^*}-\delta_{\sigma_k}.$$
It implies that $\min(\alpha_{\sigma_k}(\sigma,k),\alpha_{\sigma_k^*}(\sigma,k))$ is always reached at $\sigma_k$.
Besides $\min(\alpha_{i}(\sigma,k),\alpha_{i^*}(\sigma,k))$ increases by $1$ after each visit to $\{i,i^*\}$, so that
$$\prod\limits_{0\le k\le n-1:\,\sigma_k=i}\al_i(k)
=\frac{\Gamma( \min(\alpha_i(\sigma), \alpha_{i^*}(\sigma)))}{\Gamma( \min(\al_i, \al_{i^*}))}.$$
}
\end{proof}
\begin{proof}[Proof of Corollary~\ref{cor1}]
Since the process $X_{n}$ is partially exchangeable the limit above exists a.s.. Moreover, 
$$
\dive(N(n))=\delta_{i_0}-\delta_{i^*_0}-(\delta_{X_n} -\delta_{X^*_n} )
$$
which implies that $\dive(Y)=0$. Also $\sum_{(i,j)\in \tilde E} N_{i,j}(n)=n$, since at each time one edge of $\tilde E$ is reinforced. 

Let us now prove
that $Y_{i,j}=Y_{j^*,i^*}>0$. Remark that asymptotically $\al_{i,j}(n)/\al_i(n)\to (Y_{i,j}+Y_{j^*,i^*})/2Y_i$, where $Y_i=\sum_{i\to j} Y_{i,j}$, and note that $Y_i=Y_{i^*}$ since $\dive(Y)=0$. 

By a standard argument using Rubin's construction, $(X_n)$ visits a.s. all vertices of the graph. Besides, $X_n$ is a mixture of irreducible Markov chains by \cite{diaconis-freedman}, which implies that $Y_i>0$ for all $i\in V$, and  conditioned on $Y$, it is a Markov chain with transition probabilities $Y_{i,j}/Y_i$. Therefore $Y_{i,j}/Y_i= (Y_{i,j}+Y_{j^*,i^*})/2Y_i>0$,
which is equivalent to $Y_{i,j}=Y_{j^*, i^*}>0$.
\end{proof}

\section{Proof of Theorem \ref{main}}
\label{sec:proof}

{\red Recall the definition of $\hhh$ and  $\lll_1$ from \eqref{hhh} and \eqref{lll1}. Consider a test function $\varphi:\lll_1\mapsto \R_+$, $C^\infty$ with compact support in $\lll_1$. 
Let, 
for all initial weights
$(\alpha_{i,j})_{(i,j)\in {E}}\in\hhh\cap(0,\iy)^{{E}}$,  and $k\in V$, 
$$ \Psi(k,\al)(\varphi)=\int_{\lll_1} \varphi(y)\mu^{\al}_{k}(dy)
$$
Since $Y\in \lll_1$ a.s., in order to conclude the proof of Theorem~\ref{main} it is sufficient to prove that, for all such $\varphi$,
$(\alpha_{i,j})_{(i,j)\in E}$,
and $i_0\in V$, 
\begin{eqnarray}\label{eq_varphi}
\E_{i_0}^{(\al)}(\varphi(Y))= \Psi(i_0,\al)(\varphi).
\end{eqnarray} 
}
\begin{remark}
Note that, contrary to the Feynman-Kac approach for the VRJP in \cite{sabot-tarres}, we do not use Fourier transforms but integration against 
smooth compactly supported test functions, since even integrability of the measures $\mu^{\al}_{k}$ (which turn out to be probability measures) is not obvious a priori.
\end{remark}
{\red Fix now $\varphi$ and denote by $K\subset \lll_1$ its support. The rest of the section is devoted to the proof of  \eqref{eq_varphi}.
}

The process $(X_n, \al(n))$ is a Markov process on $V\times \R^{\tilde E}$ with generator
$$
Lf(i,\al)=\sum_{j, i\rightarrow j} \al_{i,j}\left(f(j,\al+\indic_{(i,j)}+\indic_{(j^*,i^*)})-f(i,\al)\right).
$$

Lemma \ref{fk} below states that $\Psi(X_n, \alpha(n))(\varphi)$ is a martingale for the Markov generator $L$. In the proof and later on, we will use the notation, given $i_0\in V$,  $y\in \lll_1$,
\begin{align*}
\eta_{\al}(y)&={\prod_{(i,j)\in \tilde E} y_{i,j}^{\al_{i,j}}\over 
\prod_{i\in V} y_i^{\demi \al_i}},\,\,\,\,\,\,
\rho(y)=\frac{\sqrt{y_{i_0}}   }{ \prod_{i\in V_0} \sqrt{y_i}\prod_{(i,j)\in \tilde E} y_{i,j}}\sqrt{ D(y)},\\
\ze_{i_0,\al}(y)&=C\gamma(i_0, \al)\eta_{\al}(y)\rho(y),
\end{align*}
so that 
$$
\mu_{i_0}^\al(dy) = \ze_{i_0,\al}(y)\,dy_{\lll_1}.
$$

\begin{lemma}(Feynman-Kac equation)
\label{fk}
We have
$$
L \Psi =0.
$$
\end{lemma}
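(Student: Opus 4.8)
The plan is to prove the martingale identity $L\Psi=0$ by a direct, essentially pointwise computation of the densities $\ze_{k,\al}$. First I would insert $\Psi(k,\al)(\varphi)=\int_{\lll_1}\varphi\,\ze_{k,\al}\,dy_{\lll_1}$ into the generator $L$; using that $\sum_{j:\,i_0\to j}\al_{i_0,j}$ equals the out-weight $\al_{i_0}$ at $i_0$, this rewrites $(L\Psi)(i_0,\al)(\varphi)$ as $\int_{\lll_1}\varphi(y)\big(\sum_{j:\,i_0\to j}\al_{i_0,j}\,\ze_{j,\,\al+\indic_{(i_0,j)}+\indic_{(j^*,i_0^*)}}(y)-\al_{i_0}\,\ze_{i_0,\al}(y)\big)\,dy_{\lll_1}$. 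Since the functions in $\Sc$ separate measures supported away from $\bigcup_{(i,j)\in E}\{y_{i,j}=0\}$, it suffices to show that the bracketed function vanishes on $\lll_1$, i.e. that $\sum_{j:\,i_0\to j}\al_{i_0,j}\,\ze_{j,\,\al+\indic_{(i_0,j)}+\indic_{(j^*,i_0^*)}}=\al_{i_0}\,\ze_{i_0,\al}$ as functions on $\lll_1$. (Working with test functions from $\Sc$ rather than with the bare measures is also what sidesteps the fact that integrability of the $\mu^\al_k$ is not obvious a priori.)

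The heart of the matter is then the ratio $\ze_{j,\al'_j}/\ze_{i_0,\al}$, with $\al'_j:=\al+\indic_{(i_0,j)}+\indic_{(j^*,i_0^*)}$, which I would compute through the factorisation $\ze_{k,\al}=C\,\gamma(k,\al)\,\eta_\al\,\rho_k$ (writing $\rho_k$ since $\rho$ depends on the base point only through $\sqrt{y_k}$; note that $D(y)$, hence the rest of $\rho$, is root-independent because $\dive(y)=0$). The monomial factor $\eta_{\al'_j}\rho_j/(\eta_\al\rho_{i_0})$ simplifies at once using the two identities valid on $\lll_1$: $y_i=y_i^{\to}=y_i^{\leftarrow}=y_{i^*}$ (from $\dive(y)=0$ and the $\star$-invariance of $y$) and $\sum_{j:\,i_0\to j}y_{i_0,j}=y_{i_0}$. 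The constant factor $\gamma(j,\al'_j)/\gamma(i_0,\al)$ I would evaluate with $\Gamma(x+1)=x\Gamma(x)$ and the divergence-induced relations recorded in the last Remark ($\al_i=\al_{i^*}$ off the base point, $\al_{i_0^*}=\al_{i_0}+1$, hence $\inf(\al_i,\al_{i^*})=\demi(\al_i+\al_{i^*}-\indic_{\{i=i_0\text{ or }i^*=i_0\}})$), the key structural observation being that the $V_0$–factor $\Gamma\big(\demi(\al_i+1-\indic_{i=i_0})\big)2^{\demi(\al_i-\indic_{i=i_0})}$ is left unchanged by the combined operation ``move the base point onto $i$ and raise $\al_i^\to$ by one''. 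For a step along any edge $(i_0,j)$ that is not fixed by the involution $(i,j)\mapsto(j^*,i^*)$, this gives the transparent identity $\al_{i_0,j}\,\ze_{j,\al'_j}(y)=\al_{i_0}\,(y_{i_0,j}/y_{i_0})\,\ze_{i_0,\al}(y)$ pointwise on $\lll_1$, and since $\sum_j y_{i_0,j}/y_{i_0}=1$ these contributions already add up to $\al_{i_0}\ze_{i_0,\al}(y)$.

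It remains to treat the steps along a self-$\star$ edge $(i_0,i_0^*)$ (a self-loop at $i_0$ when $i_0\in V_0$), for which the associated weight is reinforced by $2$ rather than by $1$, so that the $\gamma$– and monomial ratios pick up compensating powers of $y_{i_0,i_0^*}$, of $\al_{i_0,i_0^*}$, and of $2$; I would redo the above bookkeeping in this case and check that the contribution again fits the pattern, so that the total still collapses to $\al_{i_0}\ze_{i_0,\al}(y)$. The main obstacle is precisely this Gamma-function bookkeeping: one has to verify that all $\Gamma$– and power-of-two factors telescope consistently while carefully tracking the base-point indicators and the relative positions of $i_0$, $j$, $j^*$ with respect to $V_0$ and to the self-$\star$ edges — both in the generic step and, especially, in the self-$\star$ step, where the doubled reinforcement and the factors of $2$ inside $\gamma$ must conspire to cancel.
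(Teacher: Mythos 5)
Your argument follows the same route as the paper's proof: reduce $L\Psi=0$ to the pointwise identity $\sum_{j:\,i_0\to j}\al_{i_0,j}\,\ze_{j,\al+\indic_{(i_0,j)}+\indic_{(j^*,i_0^*)}}(y)=\al_{i_0}\,\ze_{i_0,\al}(y)$ on $\lll_1$, show that the normalisation transforms as $\gamma(j,\al+\indic_{(i_0,j)}+\indic_{(j^*,i_0^*)})=(\al_{i_0}/\al_{i_0,j})\,\gamma(i_0,\al)$ via $\Gamma(x+1)=x\Gamma(x)$ and the divergence relations, observe that the monomial part $\eta\cdot\rho$ contributes exactly $y_{i_0,j}/y_{i_0}$ (using $y_{j^*}=y_j$ on $\lll_1$), and sum with $\sum_j y_{i_0,j}=y_{i_0}$. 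For edges not fixed by the involution this is complete and coincides with the paper's proof.

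The one point where you go beyond the paper is the case $j=i_0^*$ with $(i_0,i_0^*)$ fixed by the involution, so that the generator adds $2\indic_{(i_0,i_0^*)}$; the paper's proof silently assumes each jump increments $\al_{i_0,j}$ by $1$ and never addresses this. Your plan to ``redo the bookkeeping and check that it fits the pattern'' is precisely where the difficulty sits, and it is not a formality: carrying it out gives $\Gamma(\al_{i_0,i_0^*}+2)/\Gamma(\al_{i_0,i_0^*})=\al_{i_0,i_0^*}(\al_{i_0,i_0^*}+1)$ in the denominator of the $\gamma$-ratio, a single factor $\al_{i_0}$ in its numerator, and $y_{i_0,i_0^*}^{2}/y_{i_0}$ from $\eta$ (with $\rho$ unchanged since $y_{i_0^*}=y_{i_0}$), so that term contributes $\al_{i_0}\,y_{i_0,i_0^*}^{2}\,\ze_{i_0,\al}(y)/\bigl((\al_{i_0,i_0^*}+1)\,y_{i_0}\bigr)$ rather than the required $\al_{i_0}\,y_{i_0,i_0^*}\,\ze_{i_0,\al}(y)/y_{i_0}$. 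The telescoping therefore does not close with the constants as stated; self-$\star$ edges must either be excluded or given a modified normalisation. Since the paper's own proof carries the same unacknowledged restriction, your argument is exactly as complete as the paper's wherever the paper's applies.
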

\begin{proof}
When the process jumps from $i_0$ to $j_0$, then  the normalization term becomes
$$
\gamma(j_0,\al+\indic_{\{(i_0,j_0)\}}+\indic_{\{(j_0^*,i_0^*)\}})= {\al_{i_0}\over \al_{i_0,j_0}} \gamma(i_0,\al).
$$
Indeed, $\al_{i_0,j_0}$ is incremented by 1; when $i\in V_0$ the term  $\demi( \al_i+1-\indic_{i=i_0})$ is incremented by 1 if $i=i_0$,
whereas when $i\in V_1$ the term
$\inf(\al_i, \al_{i^*})$ is incremented by 1 if $i=i_0$ and $\inf(\al_{i_0},\al_{i^*_0})= \al_{i_0}$, since $\al_{i_0^*}=\al_{i_0}+1$. All the other terms are left unchanged. 

On the other hand, 
$$
\ze_{j_0,\al+\indic_{\{(i_0,j_0)\}}+\indic_{\{(j_0^*,i_0^*)\}}}(y)= {y_{i_0,j_0}\over y_{i_0}} \ze_{i_0,\al}(y).
$$
This immediately implies that $\forall i_0\in V$,
$$
\sum_{j_0, i_0\to j_0} \al_{i_0,j_0} \Psi(j_0,\al+\indic_{\{(i_0,j_0)\}}+\indic_{\{(j_0^*,i_0^*)\}})(\varphi) = \al_{i_0} \Psi(i_0,\al)(\varphi).
$$
\end{proof}
This implies that $\Psi(X_n, \al(n))(\varphi)$ is a martingale, and therefore that
$$
\Psi(i_0, \al)(\varphi)=\E_{i_0}^{(\al)}(\Psi(X_n, \al(n))(\varphi))
$$

 The next aim is to show that 
$$\lim_{n\to\iy} \Psi(X_n, \al(n))(\varphi) = \varphi(Y)\text{ a.s. },$$ where $Y=\lim_{n\to\iy}\al(n)/n$, which will imply \eqref{eq_varphi} by dominated convergence.

To that end we {\red show a domination bound and show that $\Psi(i_0,\al(n))(\varphi)$ approximates a Gaussian integral in Propositions~\ref{Domination}} and  \ref{Gaussian-limit}, proved in Section \ref{agauss}. Then we compute that Gaussian integral in Lemma \ref{Calcul-Q}, proved in Section \ref{comp}.

We use the notation that $a_n\sim_{n\to\iy} b_n$  iff $a_n/b_n\to_{n\to\iy}1$. 

For all $w\in (0,\iy)^E\cap\hhh$, we define the following bilinear form on $\hhh$ by
$$
Q_w(x,x')=\sum_{(i,j)\in E} {1\over w_{i,j}} x_{i,j} x'_{i,j} -\sum_{ i\in V} {1\over w_i} (Bx)_i(Bx')_i+(\sum_{(i,j)\in E} x_{i,j})(\sum_{(i,j)\in E} x'_{i,j}),
$$
where 
\begin{eqnarray}\label{B}
Bx_i= \demi(x_i+x_{i^*})=\demi( \sum_{j, i\to j} x_{i,j}+\sum_{j, j\to i} x_{j,i}). 
\end{eqnarray}
We will denote $Q_w$ by $Q$ when there is no confusion, and let $Q_w(x)=Q_w(x,x)$ by a slight abuse of notation. Remark that when $z\in \lll_0$, then $z_i=z_{i^*}$ and $B(z)(i)= z_i$ so that 
\begin{eqnarray}\label{Q}
Q_w(z)=2\sum_{(i,j)\in \tilde E} {1\over w_{i,j}} (z_{i,j})^2-\sum_{i\in V} {1\over w_i}(z_{i})^2
=\sum_{(i,j)\in  \tilde E}w_{i,j}\left(\frac{z_{i,j}}{w_{i,j}}-\frac{z_{i}}{w_{i}}\right)^2.
\end{eqnarray}

{\red \begin{proposition}\label{Domination}
We have the following uniform domination, $\P^{(\alpha)}_{i_0}$ a.s.,
$$
\Psi(X_n,\al(n))(\varphi)\le c,
$$
for a constant $c>0$ depending only on $i_0$, $\alpha$ and the function $\varphi$.
\end{proposition}}
 \begin{proposition}\label{Gaussian-limit}
We have a.s. 
 $$
\lim_{n\to{\infty}} \Psi(X_n,\al(n))(\varphi)
=
\varphi(Y) \sqrt{D(Y)} {C\sqrt{2\pi}^{\vert V_0\vert +\vert V_1\vert -\vert \tilde E \vert}\over
\left(\prod_{(i,j)\in \tilde E} \sqrt{ Y_{i,j}}\right)
\left( \prod_{i\in V_0\cup V_1}\sqrt{Y_i}\right)} \int_{\lll_0} e^{-{1\over 4}Q_{{\red Y}}(z)} (dz)_{\lll_0}.
 $$
\end{proposition}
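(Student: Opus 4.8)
Write $G_n(y):=\log\eta_{\al(n)}(y)=\sum_{(i,j)\in\tilde E}\al_{i,j}(n)\log y_{i,j}-\demi\sum_{i\in V}\al_i(n)\log y_i$, so that
$$\Psi(i_0,\al(n))(\varphi)=C\,\gamma(i_0,\al(n))\int_{\lll_1}\varphi(y)\,\ee^{G_n(y)}\,\rho(y)\,dy_{\lll_1}.$$
The plan is to treat this as a Laplace integral over the bounded set $\lll_1$, in which the weights grow linearly with $\al(n)/n\to Y$ a.s.\ (Corollary \ref{cor1}); I would carry out every estimate on that full-probability event, so the equivalence $\sim$ is obtained pathwise. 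Since $\varphi\in\Sc$ is compactly supported away from $\cup_e\{y_e=0\}$, the factors $\varphi,\rho$ are bounded and continuous on a fixed neighbourhood $K$ of $\mathrm{supp}\,\varphi$ in $\lll_1\cap(0,\iy)^E$.

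First I would pin down the concentration point. On $K$ one has $\tfrac1n G_n\to\Lambda$ uniformly, with $\Lambda(y)=\sum_{\tilde E}Y_{i,j}\log y_{i,j}-\demi\sum_V Y_i\log y_i$, and I claim $\Lambda$ is uniquely maximised over $\lll_1$ at $y=Y$: writing $\sum_{\tilde E}Y_{i,j}\log(y_{i,j}/Y_{i,j})=\demi\sum_E Y_e\log(y_e/Y_e)$ by $*$-invariance of the summand, and factoring $y_{i,j}=y_i q_{i,j}$, $Y_{i,j}=Y_i p_{i,j}$ with $p,q$ stochastic, one gets $\Lambda(y)-\Lambda(Y)=-\demi\sum_{i\in V}Y_i\,\mathrm{KL}(p_{i,\cdot}\,\|\,q_{i,\cdot})\le 0$, with equality iff $q=p$, and then $y$ — having zero divergence and the same total mass as $Y$ — is the invariant probability of the irreducible kernel $p$, hence $y=Y$. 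By uniform convergence and compactness the maximiser $y_n$ of $G_n$ over $\lll_1$ is interior for large $n$ with $y_n\to Y$, and $\max_{K\setminus B(Y,\delta)}G_n\le G_n(y_n)-c_\delta n$ for some $c_\delta>0$, so the contribution of $K\setminus B(Y,\delta)$ is exponentially smaller than the leading term.

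Next I would do the local Gaussian expansion near $Y$. Parametrising $\lll_1$ by $\lll_0$ via $y=y_n+z$ — legitimate since $\lll_1$ is affine directed by $\lll_0$ and the reference measures are translates of one another, so $dy_{\lll_1}=(dz)_{\lll_0}$ — the Taylor expansion of $G_n$ at its interior maximiser $y_n$ has no linear term, and computing the second derivatives of the logarithms and using the form \eqref{Q} of $Q_w$ on $\lll_0$ (where $(Bz)_i=z_i$) shows that its quadratic part equals $-\tfrac14 Q_{\be(n)}(z)$ up to a $1+o(1)$ factor, with $\be(n)$ the configuration of the statement and $Q_{\be(n)}$ positive definite on $\lll_0$ (Appendix \ref{bases}). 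This form has order $n$, so the Gaussian mass sits at scale $\|z\|\asymp n^{-1/2}$, on which the cubic remainder $O(n\|z\|^3)$ is $o(1)$ and $\varphi(y_n+z)\rho(y_n+z)=\varphi(Y)\rho(Y)(1+o(1))$; hence
$$\int_{\lll_1}\varphi(y)\,\ee^{G_n(y)}\,\rho(y)\,dy_{\lll_1}\ \underset{n\to\iy}{\sim}\ \varphi(Y)\,\rho(Y)\,\eta_{\al(n)}(y_n)\int_{\lll_0}\ee^{-\frac14 Q_{\be(n)}(z)}(dz)_{\lll_0}.$$

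Finally I would identify the constant, and here lies the main obstacle. Recalling $\rho(Y)=\sqrt{Y_{i_0}}\sqrt{D(Y)}\big/\bigl(\prod_{V_0}\sqrt{Y_i}\prod_{\tilde E}Y_{i,j}\bigr)$, what remains is to prove
$$\gamma(i_0,\al(n))\,\eta_{\al(n)}(y_n)\ \underset{n\to\iy}{\sim}\ \frac{\sqrt{2\pi}^{\,\vert V_0\vert+\vert V_1\vert-\vert\tilde E\vert}\ \prod_{(i,j)\in\tilde E}\sqrt{Y_{i,j}}}{\sqrt{Y_{i_0}}\ \prod_{i\in V_1}\sqrt{Y_i}},$$
after which multiplying by $C\rho(Y)$ gives the stated prefactor. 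I would obtain this by feeding Stirling's formula $\Gamma(t)\sim\sqrt{2\pi}\,t^{t-1/2}\ee^{-t}$ into every Gamma factor of $\gamma(i_0,\al(n))$ — using $\al_{i,j}(n)/n\to Y_{i,j}$ and the fact that $\inf(\al_i(n),\al_{i^*}(n))/n$ and $\tfrac12(\al_i(n)+1-\indic_{i=i_0})/n$ converge to the corresponding multiples of $Y_i$ — together with $\eta_{\al(n)}(y_n)=\prod_{\tilde E}y_{n,i,j}^{\al_{i,j}(n)}\big/\prod_V y_{n,i}^{\frac12\al_i(n)}$ and $y_n\to Y$: the super-exponential $t^t\ee^{-t}$ factors of the $\Gamma$'s cancel against the $Y$-powers of $\eta_{\al(n)}(y_n)$, and the $\log n$ coefficients cancel too, both cancellations being exactly the first-order optimality $\nabla\Lambda(Y)\perp\lll_0$ reappearing. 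The hard part is precisely this bookkeeping — keeping every Gamma argument, every factor $2^{\frac12(\al_i-\indic_{i=i_0})}$ of $V_0$, and the $\demi$-powers of the $y_i$ (which differ for $i\in V_0$, for $i\in V_1$, and for $i=i_0$) exactly aligned, using $\al_i=\al_{i^*}$ off $\{i_0,i_0^*\}$, $\al_{i_0^*}=\al_{i_0}+1$ when $i_0\notin V_0$, and $\inf(\al_i,\al_{i^*})=\demi(\al_i+\al_{i^*}-\indic_{\{i=i_0\text{ or }i^*=i_0\}})$ — and then checking that after all cancellations the limit is finite and equal to the stated constant, uniformly along the random sequence $\al(n)$ so that the $\sim$ holds almost surely rather than in mean.
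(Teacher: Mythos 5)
Your overall strategy is the same as the paper's (Laplace method centred near $Y$, Gaussian approximation by $Q_{\be(n)}$ on $\lll_0$, Stirling's formula for $\gamma(i_0,\al(n))$; compare Lemmas \ref{ag}, \ref{lemmax}, \ref{neg}), and your KL-divergence identification of the maximiser is a clean variant of the paper's $\log(1+x)\le x$ argument. But there is a genuine gap in the bookkeeping of powers of $n$, and it sits exactly at the one computation you declare to be ``the hard part'' and do not carry out. Since $\eta_{\al(n)}=(\eta_{\be(n)})^n$, the quadratic part of $G_n$ at its maximiser in the unscaled variable $z=y-y_n$ is $-{n\over 4}Q_{\be(n)}(z)(1+o(1))$, not $-{1\over 4}Q_{\be(n)}(z)$; passing to the variable $\sqrt{n}\,z$ (where the integral in the statement lives) produces a Jacobian $\sqrt{n}^{\,-(\vert\tilde E\vert-\vert V_1\vert-1)}$, since $\dim\lll_0=\vert\tilde E\vert-\vert V_1\vert-1$. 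Your displayed asymptotic
$$\int_{\lll_1}\varphi\,\ee^{G_n}\rho\,dy_{\lll_1}\sim\varphi(Y)\rho(Y)\,\eta_{\al(n)}(y_n)\int_{\lll_0}\ee^{-{1\over4}Q_{\be(n)}(z)}(dz)_{\lll_0}$$
omits this factor and is therefore false as written. Correspondingly, your stated target $\gamma(i_0,\al(n))\,\eta_{\al(n)}(y_n)\sim\sqrt{2\pi}^{\,\vert V_0\vert+\vert V_1\vert-\vert\tilde E\vert}\prod\sqrt{Y_{i,j}}\big/\bigl(\sqrt{Y_{i_0}}\prod_{V_1}\sqrt{Y_i}\bigr)$ is also false: the correct asymptotic (the paper's Lemma \ref{ag}) carries the extra factor $\sqrt{n}^{\,\vert\tilde E\vert-\vert V_1\vert-1}$, which is precisely what cancels the missing Jacobian. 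The two errors compensate, so your final prefactor is right, but each intermediate claim is off by $\sqrt{n}^{\,\pm\dim\lll_0}$, and had you actually pushed Stirling's formula through $\gamma(i_0,\al(n))$ against your stated target you would have found it does not check out. The fix is mechanical (restore the $n$ in the quadratic form and the $\sqrt n$ powers in the Stirling expansion), but the proof is not complete without it.

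A secondary, more routine gap: ruling out the region between scales $n^{-1/2}$ and $\delta$ is not covered by either of your two estimates. Outside $B(Y,\delta)$ you get exponential decay, and at scale $\|z\|\asymp n^{-1/2}$ the cubic remainder $O(n\|z\|^3)$ is $o(1)$, but for $n^{-1/2}\ll\|z\|\le\delta$ the remainder can be as large as $n\delta^3$ and the Gaussian approximation is not valid; one needs a separate monotonicity or concavity argument there (this is what the paper's Lemma \ref{neg} does, via the concavity in $p$ of the entropy functional on the shrinking window $\|y-\be(n)\|\le n^{-1/2+\eta}$).
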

\begin{lemma}\label{Calcul-Q}
We have for $Y\in \lll_1$ 
\beq
&&
\int_{\lll_0} e^{-{1\over 4}Q_Y(z)} (dz)_{\lll_0}
\\
&=&
\left( \prod_{(i,j)\in \tilde E} \sqrt{Y_{i,j}}\right)
\left( \prod_{i\in V_0\cup V_1} \sqrt{Y_i} \right) 
{\sqrt{ 2}^{\vert V_0\vert +\vert V_1\vert} \sqrt{ 2\pi}^{\vert \tilde E\vert -\vert V_1\vert -1}\over
 2 \sqrt{D(Y)}}
\eeq
\end{lemma}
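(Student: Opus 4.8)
The plan is to read the left--hand side as a finite--dimensional Gaussian integral over the vector space $\lll_0$ and to evaluate the resulting determinant via the matrix--tree theorem. By the second expression for $Q_w$ in \eqref{Q}, on $\lll_0$ one has $Q_w(z)=\sum_{(i,j)\in E}w_{i,j}\big(z_{i,j}/w_{i,j}-\phi_i\big)^2$ with $\phi_i=z_i/w_i$; this is a sum of squares, and, using the $*$--invariance of $w>0$ together with the connectivity hypothesis on $G$, its only zero in $\lll_0$ is $z=0$ (a zero would be a flow proportional to $w$, and the constraint $\sum_E z_e=0$ then forces it to vanish). So $Q_w|_{\lll_0}$ is positive definite; writing $d=\dim\lll_0$ and $N$ for its matrix in the coordinates on $\lll_0$ given by a basis $B\subset\tilde E$, we get
$$\int_{\lll_0}e^{-\frac14 Q_w(z)}\,(dz)_{\lll_0}=\frac{(4\pi)^{d/2}}{\sqrt{\det N}},$$
and $\det N$ does not depend on the choice of $B$ because $(dz)_{\lll_0}$ does not, by Appendix \ref{bases}.

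The heart of the proof is the evaluation of $\det N$. I would make the vertex potentials genuine integration variables, rewriting each factor $e^{-\frac14 w_e(z_e/w_e-\phi_{\underline e})^2}$ through a Hubbard--Stratonovich identity; after exchanging integrals, the now unconstrained Gaussian integration over $z\in\lll_0$ can be performed and leaves a Gaussian integral in the potential variables whose quadratic form is, up to a positive scalar, a weighted Laplacian built from the conductances $(w_{i,j})$ on $G$ (the matrix $-M(w)$ of the remark following Theorem \ref{main}), acting on functions modulo constants. By the matrix--tree theorem the determinant of that form equals $D(w)=\sum_T\prod_{(i,j)\in T}w_{i,j}$ up to an explicit scalar, the value being root--independent because the row sums of $M(w)$ vanish. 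Tracking the auxiliary normalisations, the monomial $\prod_{(i,j)\in\tilde E}\sqrt{w_{i,j}}$ comes from the rescaling $z_e\mapsto z_e/w_e$, the monomial $\prod_{i\in V_0\cup V_1}\sqrt{w_i}$ from the Gaussians in the potential variables, and the powers of $2$ from the $*$--symmetrisation --- concretely, the coefficient $\frac12$ in the operator $B$ of \eqref{B} and the two--to--one relation between $E$ and $\tilde E$ each contribute a $\sqrt2$ per vertex of $V_0$ and per $*$--orbit in $V_1\sqcup V_1^*$. (Equivalently, one writes $Q_w|_{\lll_0}(z)=\|Rz\|^2$ for the linear map $R\colon\lll_0\to\R^E$, $(Rz)_{(i,j)}=\sqrt{w_{i,j}}(z_{i,j}/w_{i,j}-\phi_i)$, and expands $\det N=\det(R^\top R)$ by the Cauchy--Binet formula; the non--degenerate minors are indexed by spanning trees of $G$ and reassemble $D(w)$.)

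Substituting this value of $\det N$ into $(4\pi)^{d/2}/\sqrt{\det N}$ and splitting $(4\pi)^{d/2}=\sqrt2^{d}\sqrt{2\pi}^{d}$, the $w$--monomials and powers of $2$ collect into the stated right--hand side; this matching also fixes $d=\dim\lll_0$.

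The genuine obstacle is not the matrix--tree identity, which is classical, but the precise bookkeeping around it: determining $d=\dim\lll_0$ and, above all, accounting correctly for every power of $2$ created by the involution --- distinguishing the fixed set $V_0$, where the $*$--symmetry degenerates, from the free orbits in $V_1\sqcup V_1^*$ --- and for those produced by the passage between sums over $E$ and over $\tilde E$. Since $*$ breaks the reversibility behind the classical ``magic formula'', one must also make sure the Laplacian/potential used here is the asymmetric one ($\phi_i=z_i/w_i^{\to}$, with $*$--image $z_i/w_i^{\leftarrow}$), which is exactly why the determinantal factor $\sqrt{D(y)}$ comes out non--symmetric.
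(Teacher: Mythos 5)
Your overall strategy --- evaluate $\int_{\lll_0}e^{-{1\over4}Q_w}$ directly as $(4\pi)^{d/2}/\sqrt{\det N}$ and then identify $\det N$ combinatorially --- is genuinely different from the paper's, which never computes the restricted determinant at all: it extends the integral to the whole of $\hhh$ via a $Q$-orthogonal decomposition $x=\lambda Y+\omega+z$ (Lemma \ref{Ortho}), evaluates the unconstrained Gaussian on $\R^{\tilde E}$ by converting the $\tilde E\times\tilde E$ determinant into a $V\times V$ one with $\det(\Id+AB)=\det(\Id+BA)$, and only then divides off the $\lambda$- and $\aaa$-contributions. The difference matters, because the step you leave unproved is exactly the one this machinery exists to handle. $Q_w|_{\lll_0}$ is a \emph{symmetric} positive form, whereas $D(w)$ is a principal minor of the \emph{non-symmetric} matrix $M(y)$; your claim that a Hubbard--Stratonovich step leaves behind ``a weighted Laplacian built from the conductances, the matrix $-M(w)$, acting on functions modulo constants'' cannot be taken literally, since a Gaussian integral in the potential variables can only produce a symmetric quadratic form. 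In the paper the non-symmetric determinant emerges only at the very end, by splitting $\R^V$ into the $*$-symmetric part $\sss$ and the antisymmetric part $\aaa$, obtaining $\det\bigl((-G^Y+2D_V\un{}^t\un D_V)|_{\sss}\bigr)$ from the full-space integral and $\det\bigl(((-G^Y)^{-1})|_{\aaa}\bigr)$ from the $\omega$-directions, and recombining the two with the Schur-type identity $\det M=\det(M|_{E_1})/\det((M^{-1})|_{E_2})$; only after that does the directed matrix-tree theorem apply. Your sketch contains no mechanism for this recombination, and the powers of $2$ you propose to track are a symptom of it, not the cause of the difficulty.

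The Cauchy--Binet variant in your parenthesis has the same gap in a different guise: $\det(R^{\top}R)=\sum_S(\det\tilde R_S)^2$ is a sum of \emph{squares} of minors indexed by $d$-subsets of $E$ (with $d=\dim\lll_0=|\tilde E|-|V_1|-1$, which should be computed from the description of the bases in Appendix \ref{bases} rather than ``fixed by matching'' with the answer), and the assertion that these squared minors reassemble into the arborescence polynomial $D(w)$ divided by the stated monomial is precisely the identity that needs proving. It is not the classical matrix-tree theorem, which concerns minors of the vertex Laplacian, nor its dual for cycle spaces, because $\lll_0$ is neither the cycle space of $G$ nor of $\overline G$ but the space of $*$-symmetric, divergence-free flows of total mass zero, and the form is $2\sum_{\tilde E}z_e^2/w_e-\sum_V z_i^2/w_i$ rather than a pure edge form. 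So while the shape of your answer is right and the route is conceivable in principle, the heart of the proof is missing: you must either supply the combinatorial identity for the minors of $R$ on $\lll_0$, or switch to the paper's extend-and-quotient argument.
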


\section{Proof of {\red Propositions~\ref{Domination}} and \ref{Gaussian-limit}}
\label{agauss}
Let, for all $n\in\N$, {\red 
$$\be(n)=\frac{\al(n)}{(\sum_{e\in \tilde E} \alpha_e) + n}$$
so that $\beta(n)\in \lll_1$ and $\beta(n)\to Y$ a.s.}
The following Lemma \ref{ag} analyses the asymptotic behavior  of $\gamma(X_n,\al(n))$.

In all the sequel, for simplicity of notations,  $c$ will denote a positive constant, which may change from a line to another, depending only on $i_0$, $\alpha$ and $\varphi$. 
\begin{lemma}
\label{ag}
We have, a.s.,
$$
\gamma(X_n,\al(n))
\sim
C \sqrt{n}^{\vert \tilde E\vert -\vert V_1\vert-1} \sqrt{2\pi}^{\vert V_0\vert+\vert V_1\vert -\vert \tilde E\vert}
 { \prod_{i,j\in \tilde E} \sqrt{Y_{i,j}} \over \sqrt{ Y_{X_n}}\prod_{i\in V_1} \sqrt{Y_i}}\left(\frac{1}{\eta_{\be(n)}(\be(n))}\right)^{(\sum_{\tilde E} \alpha_e)+n}. 
$$
{\red Besides, there exists a constant $c>0$, depending only on $\alpha$, such that a.s.
$$
\gamma(X_n,\al(n))
\le
c
\sqrt{2\pi}^{\vert V_0\vert+\vert V_1\vert -\vert \tilde E\vert}
 { \prod_{i,j\in \tilde E} \sqrt{\alpha_{i,j}(n)} \over \sqrt{ \alpha(n)_{X_n}}\prod_{i\in V_1} \sqrt{\alpha_i(n)}}\left(\frac{1}{\eta_{\be(n)}(\be(n))}\right)^{(\sum_{\tilde E} \alpha_e)+n}. 
$$
}
\end{lemma}
\begin{proof}
Recall the asymptotic behavior of the gamma function
$$
\Gamma(z) \sim_{z\to\iy} \sqrt{2\pi} z^{z-\demi} e^{-z},
$$
which in particular implies $ \Gamma(z\pm \demi) \sim z^{\pm \demi} \Gamma(z)$.

Therefore, for $i\in V_0$ and large $\al$, we have
\begin{eqnarray*}
\Gamma(\demi(\al_i+1-\indic_{\{i=x\}}))
&\sim& 
\sqrt{2\pi} \sqrt{\al_i\over 2}^{-\indic_{\{i=x\}}} ({\al_i\over 2})^{\demi \al_i} e^{-\demi \al_i}.
\end{eqnarray*}
Similarly, for $i\in V_1$,
\begin{eqnarray*}
\Gamma(\inf(\al_i, \al_{i^*}))=\Gamma\left(\demi(\al_i+\al_{i^*}-\indic_{\{i=x \; or \; i=x^*\}})\right)&\sim &{\sqrt{2\pi} \over \sqrt{\al_i}^{\indic_{\{i=x\; or \; x^*}\}}} {\al_i^{\demi \al_i} \al_{i^*}^{\demi \al_{i^*}}\over \sqrt{\al_i}}e^{-\demi (\al_i+\al_{i^*})}.
\end{eqnarray*}

This yields that for $n$ large that
\begin{eqnarray*}
\gamma(x,\al(n))
&\sim&
C \sqrt{2\pi}^{\vert V_0\vert+\vert V_1\vert -\vert \tilde E\vert}{1\over \sqrt{ \al_{x}(n)}}
\left(  {\prod_{i\in V} \al_i(n)^{\demi \al_i(n)}
\over 
\prod_{i,j\in \tilde E} \al_{i,j}(n)^{\al_{i,j}(n)}}\right)\left({ \prod_{i,j\in \tilde E} \sqrt{\al_{i,j}(n)} \over \prod_{i\in V_1} \sqrt{\al_i(n)}}\right)
\\
&=&
{\red C \sqrt{n}^{\vert \tilde E\vert -\vert V_1\vert-1} \sqrt{2\pi}^{\vert V_0\vert+\vert V_1\vert -\vert \tilde E\vert}
 { \prod_{i,j\in \tilde E} \sqrt{\beta_{i,j}(n)} \over \sqrt{ \beta_{X_n}(n)}\prod_{i\in V_1} \sqrt{\beta_i(n)}}{1\over \left(\eta_{\be(n)}(\be(n))\right)^{(\sum_{\tilde E} \alpha_e)+n}}.}
\end{eqnarray*}
where we use {\red in the first line} that $\sum \al_{i,j}(n)-\demi\sum \al_i(n) =0$. {\red This concludes the first part of the statement. The uniform lower bound is proved in a similar way, using the fact that $\alpha(n)_{i,j}\ge \alpha_{i,j}$ and that, for any $z_0>0$, we have a uniform control on the function $\Gamma(z)$ by 
$$
c_- \sqrt{2\pi} z^{z-\demi} e^{-z} \le \Gamma(z) \le c_+ \sqrt{2\pi} z^{z-\demi} e^{-z},
$$
for $z\ge z_0$.} 

\end{proof}

{\red Recall that $K\subset \lll_1$ is the compact support of $\varphi$. Since $\lll_1$ is an open set, there exists $\epsilon>0$ and a compact subset $K'\subset \lll_1$ such that $\hbox{dist}((K')^c,K)\ge \epsilon$. We fix such a compact subset $K'$ in the sequel.}  Let us now study the asymptotic behavior  of $\eta_{X_n, \al(n)}(y)$ as $n\to\iy$. We will need the following Lemma \ref{lemmax}.
\begin{lemma}
\label{lemmax}
Given $\be\in\lll_1$, the function $y\in \lll_1\to \log \eta_{\be}(y)$ is nonpositive and reaches its maximum on $\lll_1$ at $y= \be$, and 
\begin{align}
\label{taylor}
\log\left(\frac{\eta_\be(y)}{\eta_\be(\be)}\right)=-\frac{1}{4}Q_\be(y-\be)+O(\|y-\be\|_\iy^3), 
\end{align}
where {\red the estimate $O(\|y-\be\|_\iy^3)$ is uniform for $y\in K$ and $\beta$ in $K'$.
  Besides, there is a positive constant $c>0$ such that
$$
\log\left(\frac{\eta_\be(y)}{\eta_\be(\be)}\right)\le -c,
$$
for all $y\in K$ and $\beta \notin K'$.}
\end{lemma}
\begin{proof}
Let 
\begin{equation}
\label{py}
p_{ij}=\frac{y_{ij}}{y_i},\,\,\,p_{ij}^\be=\frac{\be_{ij}}{\be_i}.
\end{equation} We have 
\begin{equation}
\label{feta}
\log\eta_\be(y)=\frac{1}{2}\sum_{(i,j)\in E}\be_{i,j}\log(p_{i,j})\le0,
\end{equation}
thus
\begin{align}
\nonumber
\log\left(\frac{\eta_\be(y)}{\eta_\be(\be)}\right)&=\demi\sum_{(i,j)\in E}\be_{i,j}\log\left(\frac{p_{i,j}}{p_{i,j}^\be}\right)
=\demi\sum_{(i,j)\in E}\be_{i,j}\log\left(1+\frac{p_{i,j}-p_{i,j}^\be}{p_{i,j}^\be}\right)\\
\label{ete}
&\le\demi\sum_{(i,j)\in E}\be_i(p_{i,j}-p_{i,j}^\be)=0,
\end{align}
where we use in the inequality that $\log(1+x)\le x$ for all $x>-1$. 

Let $z=y-\be \in \lll_0$: a second order Taylor expansion of \eqref{ete} yields
\begin{align*}
\log\left(\frac{\eta_\be(y)}{\eta_\be(\be)}\right)&=-\frac{1}{4}\sum_{(i,j)\in E}\frac{\be_i^2}{\be_{i,j}}(p_{i,j}-p_{i,j}^\be)^2+O(\|y-\be\|_\iy^3)
\end{align*}
with a uniform control on the reminder term for $\beta\in K'$ and $y\in K$. Now note that
\begin{align*}
\sum_{(i,j)\in E}\frac{\be_i^2}{\be_{i,j}}(p_{i,j}-p_{i,j}^\be)^2
=\sum_{(i,j)\in E}\be_{i,j}\left(\frac{z_{i,j}/\be_{i,j}+1}{z_{i}/\be_i+1}-1\right)^2
=\sum_{(i,j)\in E}\frac{\be_{i,j}}{(z_{i}/\be_i+1)^2}\left(\frac{z_{i,j}}{\be_{i,j}}-\frac{z_{i}}{\be_{i}}\right)^2,
\end{align*}
which enables us to deduce \eqref{taylor} by \eqref{Q}.

{\red The function $(\beta,y)\to \log\left(\frac{\eta_\be(y)}{\eta_\be(\be)}\right)$ can be extended to $\overline \lll_1\times K$ by continuity by
$$
 \log\left(\frac{\eta_\be(y)}{\eta_\be(\be)}\right)= \demi\sum_{(i,j)\in E, \; \beta_{i,j}>0}\be_{i,j}\log\left(\frac{p_{i,j}}{p_{i,j}^\be}\right).
$$
Since $\dive(\beta)=0$ and $\sum_{i,j} \beta_{i,j}=1$, we have that $\log\left(\frac{\eta_\be(y)}{\eta_\be(\be)}\right)$ is negative for $\beta\in \overline \lll_1$ and $y\in K$. This implies that
$$
\sup_{\beta \in \lll_1\setminus K', \; y\in K} \log\left(\frac{\eta_\be(y)}{\eta_\be(\be)}\right) <0.
$$
 }
\end{proof}
{\red Now we first  bound $\Psi(X_n,\al(n))(\varphi)$ when $\beta(n)$ outside $K'$.
\begin{lemma}\label{bound_out}
There exists  a constant $c_2>0$ such that for all integer $n$, if $\beta(n)\notin K'$, a.s.
$$
\Psi(X_n,\al(n))(\varphi)\le \exp(-c_2 n).
$$
\end{lemma}
\begin{proof}
By Lemma~\ref{ag}, we have
\begin{eqnarray*}
&&\Psi(X_n,\al(n))(\varphi)
\\
&\le&  \sqrt{2\pi}^{\vert V_0\vert+\vert V_1\vert -\vert \tilde E\vert}
 { \prod_{i,j\in \tilde E} \sqrt{\alpha(n)_{i,j}} \over \sqrt{ \alpha(n)_{X_n}}\prod_{i\in V_1} \sqrt{\alpha(n)_i}}
 \int_K \left(\frac{\eta_{\be(n)}(y)}{\eta_{\be(n)}(\be(n))}\right)^{(\sum_{\tilde E} \alpha_e)+n} \varphi(y) \rho(y) dy_{\lll_1}
 \\
 &\le &
e^{-c_1 n}  { \prod_{i,j\in \tilde E} \sqrt{\alpha(n)_{i,j}} \over \sqrt{ \alpha(n)_{X_n}}\prod_{i\in V_1} \sqrt{\alpha(n)_i}} \int_K  \varphi(y) \rho(y) dy_{\lll_1}
\end{eqnarray*}
where we used the second part of Lemma~\ref{Q} to bound $\left(\frac{\eta_{\be(n)}(y)}{\eta_{\be(n)}(\be(n))}\right)^{(\sum_{\tilde E} \alpha_e)+n}$ since $\beta(n)\notin K'$ and $y\in K$. This enables us to conclude since $\alpha(n)$ is at most linear in $n$.
\end{proof}
}

Given $\eta\in(0,1/2)$, let
$$
K_n=\{y\in \lll_1 :\;\; \| y-\be(n)\|_\iy \le n^{-1/2+\eta}\}.
$$

\begin{lemma}
\label{neg}
For all $\eta\in(0,1/2)$, 
{\red there exists $c>0$ such that for $n$ large enough, if $\beta(n)\in K'$, a.s.

$$
\int_{K_{n}^c} \varphi(y) \mu_{X_n}^{\al(n)} (dy) \le \exp(-c n^{2\eta}).
$$ 
}\end{lemma}
\begin{proof}
Note that, if $\xi$ is the differentiable map defined on $\lll_1$ by $\xi(y)=p$, where $p$ given as in \eqref{py}, then $\xi$ is invertible and $\xi^{-1}$ is differentiable on $\xi(\lll_1)$. Indeed,  given $p\in\xi(\lll_1)$, there exists a unique $y=(y_{ij})_{(ij)\in E}\in\lll_1$ such that $\xi(y)=p$, defined as the invariant measure on $E$ of the Markov Chain with jump probability $p_{ij}$ from $i$ to $j$, and which is given by the Kirchhoff formula. {\red For simplicity, we denote $p^y$ for $\xi(y)$. As a consequence of the previous remarks, there exists a constant $c>0$ such that for any $\beta$ in the compact $K'$ and any $y$ in the compact $K$, we have $\|p^y-p^\be\|_\iy\l \ge c \|y-\be\|_\iy$. This implies in particular that if $\beta(n)\in K'$ and $y\in K_n^c\cap K$, then $ \|p^y-p^{\be(n)}\|_\iy> c n^{-1/2+\eta}$. 

Then, using that the function $p\to \demi\sum_{(i,j)\in E}\be_{i,j}(n)\log\left(\frac{p_{i,j}}{p_{i,j}^{\be(n)}}\right)$ is concave we deduce that, for $\beta(n)\in K'$,
\begin{eqnarray*}
\sup_{y\in K_{n}^c\cap K}\log\left(\frac{\eta_{\be(n)}(y)}{\eta_{\be(n)}(\be(n))}\right)
&=&\sup_{y\in K_{n}^c\cap K}\demi\sum_{(i,j)\in E}\be_{i,j}(n)\log\left(\frac{p^y_{i,j}}{p_{i,j}^{\be(n)}}\right)\\
&\le&
\sup_{y\in K, \; \|p^y-p^{\be(n)}\|_\iy> c_4 n^{-1/2+\eta}} \demi\sum_{(i,j)\in E}\be_{i,j}(n)\log\left(\frac{p^y_{i,j}}{p_{i,j}^{\be(n)}}\right)\\
&\le&\sup_{p\in \xi(\lll_1), \;\|p-p^{\be(n)}\|_\iy = c n^{-1/2+\eta}}\demi\sum_{(i,j)\in E}\be_{i,j}(n)\log\left(\frac{p_{i,j}}{p_{i,j}^{\be(n)}}\right)\\
&\le&-c n^{2\eta-1}.
\end{eqnarray*}
where we used concavity in the third line and Lemma~\ref{lemmax} in the fourth line.
}

On the other hand, by Lemma \ref{ag}, {\red for $\beta(n)\in K'$,}
$$\eta_{\al(n)}(\be(n))\g(X_n,\al(n))=(\eta_{\be(n)}(\be(n)))^{(\sum_{\tilde E} \alpha_e)+n}\g(X_n,\al(n))\le c n^{\frac{|\tilde{E}-|V_1|-1}{2}},$$
{\red for a constant $c>0$.} 
This implies that there is a constant $c>0$ such that for $n$ large enough
$$|\int_{K_{n}^c} \varphi(y) \mu_{X_n}^{\al(n)} (dy)|\le n^{\frac{|\tilde{E}-|V_1|-1}{2}}\exp(-c n^{2\eta})\int_{K_{n}^c} (\varphi\rho)(y)dy_{\lll_1}.$$
\end{proof}
{\red It remains now to prove that $\int_{K_{n}} \varphi(y) \mu_{X_n}^{\al(n)} (dy)$ has the good asymptotic behavior given in Proposition~\ref{Gaussian-limit} and that we have a uniform control on this integral for $\beta(n)\in K'$. } For all $y\in K_n$, by Lemmas \ref{ag} and \ref{lemmax},
\begin{align*}
&\ze_{X_n,\al(n)}(y)=C\gamma(X_n, \al(n))(\eta_{\be(n)}(\be(n))^{(\sum_{\tilde E} \alpha_e)+n}\frac{\eta_{\be(n)}(y)}{\eta_{\be(n)}(\be(n))}\frac{\sqrt{y_{X_n}}   }{ \prod_{i\in V_0} \sqrt{y_i}\prod_{(i,j)\in \tilde E} y_{i,j}}\sqrt{ D(y)}\\
&\sim C\left(\frac{\eta_{\be(n)}(y)}{\eta_{\be(n)}(\be(n))}\right)^{(\sum_{\tilde E} \alpha_e)+n}\frac{\sqrt{ D(Y)}}{ \prod_{(i,j)\in \tilde E} \sqrt{Y_{i,j}}\prod_{i\in V_0\cup V_1} \sqrt{Y_i}}
\sqrt{n}^{\vert \tilde E\vert -\vert V_1\vert-1} \sqrt{2\pi}^{\vert V_0\vert+\vert V_1\vert -\vert \tilde E\vert}\\
&\sim C\exp\left(-\frac{1}{4}Q_{\be(n)}(\sqrt{n}(y-\be(n)))\right)\frac{\sqrt{n}^{\vert \tilde E\vert -\vert V_1\vert-1} \sqrt{2\pi}^{\vert V_0\vert+\vert V_1\vert -\vert \tilde E\vert}\sqrt{ D(Y)}}{  \prod_{(i,j)\in \tilde E} \sqrt{Y_{i,j}}\prod_{i\in V_0\cup V_1} \sqrt{Y_i}}
\end{align*}
where the notation $a_n(y)\sim b_n(y)$ means here that $a_n(y)/b_n(y)$ is close to $1$ when $n$ is large and $y\in K_n$. 
By the change of variables $z=\sqrt{n}(y-\be(n))$, it yields a Jacobian factor $\sqrt{n}^{-\vert \tilde E\vert +\vert V_1\vert+1}$ (since $\lll_1$ has dimension $\vert \tilde E\vert -\vert V_1\vert-1$), which cancels the corresponding term above. 

{\red This implies, combined with Lemma~\ref{bound_out} and Lemma~\ref{neg}, that $\int_{K_{n}} \varphi(y) \mu_{X_n}^{\al(n)} (dy)$ has the asymptotic behavior given in Proposition~\ref{Gaussian-limit}.  Besides, for $\beta(n)\in K'$ and $y\in K\cap K_n$, Lemma~\ref{lemmax} gives a uniform control on the asymptotics of $\frac{\eta_{\be(n)}(y)}{\eta_{\be(n)}(\be(n))}$, hence yielding a uniform domination on  $\int_{K_{n}} \varphi(y) \mu_{X_n}^{\al(n)} (dy)$ for $\beta(n)\in K'$ which, again together with Lemma~\ref{bound_out} and Lemma~\ref{neg}, enables us to deduce Proposition~\ref{Domination}.}
\section{Computation of the Gaussian integral: Proof of Lemma \ref{Calcul-Q}}
\label{comp}

{\red For simplicity we simply write $Q$ for $Q_Y$ (and we consider here $Y$ as a determinist vector in $\lll_1$).} Let $D_{\tilde E}$ (resp. $D_V$) denote the diagonal $\tilde E\times \tilde E$ (resp. $V\times V$) matrix with diagonal coefficients $Y_{i,j}$
(resp. $Y_i$).
We have
\begin{eqnarray}
\nonumber
Q(x,x')&=& 2 \sum_{(i,j)\in \tilde E} {1\over Y_{i,j}} x_{i,j} x'_{i,j} -\sum_{i\in V} {1\over Y_i} Bx_iBx'_i+ (\sum_{i\in V} Bx_i)(\sum_{i\in V}Bx'_i) 
\\
&=&
\label{mform}
2  {}^tx( D_{\tilde E}^{-1}(\Id_{\tilde E}-\demi D_{\tilde E} {}^tBD_V^{-1}B+\demi D_{\tilde E} {}^t B\un {}^t \un B)) x'
\end{eqnarray}
where in the last formula $\un$ is the column vector of size $V$ and containing only 1's, and 
$B$ stands for the $V\times \tilde E $ matrix with coefficients
$$
B_{i,e}=\left\{\begin{array}{ll} \demi,&\hbox{ if $\underline e=i$ or $\underline e=i^*$ or $\overline e =i$ or $\overline e= i^*$}
\\
0,& \hbox{ otherwise}
\end{array}
\right.
$$
which is the matrix of the operator $B$ defined in (\ref{B}) when $\hhh$ is identified with $\R^{\tilde E}$. (N.B. : in the last formula we
understand the subset $\tilde E$ as a subset of $E$ obtained by taking a representative among two equivalent edges $(i,j)$ and $(j^*,i^*)$.) 

We denote by $G^Y$ the infinitesimal generator of the Markov Jump Process with rate $Y$
$$
G^Y f (i)= \sum_{j, i\rightarrow j} Y_{i,j}(f(j)-f(i)), \;\;\; \forall f\in \R^V.
$$
Since $Y$ is in $\lll_1$, $1$ is an invariant measure of $G^Y$ (i.e., $1 G^Y= 0$), hence $G^Y$ leaves invariant $\kkk_0=\{x\in\R^V:\,\sum_{i\in V}x_i=0\}$ so that $\im(G^Y)=\kkk_0$ by Perron-Frobenius theorem (existence of spectral gap), and $(G^Y)^{-1}$ is well-defined on $\kkk_0$.
 
Let 
$$
\sss=\{(h_i)_{i\in V}, \;\;\; h_i= h_{i^*}\} \simeq \R^{\tilde V},\,\,\aaa=\{(h_i)_{i\in V}, \;\;\; h_i= - h_{i^*}\} \simeq \R^{V_1}.
$$
and let us consider the corresponding orthogonal projections on $\sss$ and $\aaa$ given  by
$$
\ppp_\sss h _i =\demi(h_i+ h_{i^*}),\,\,\,\ppp_\aaa h _i =\demi(h_i- h_{i^*})
$$

\begin{lemma}\label{Ortho}
(Orthogonal decomposition) For all $x\in\hhh$, we have 
$$x=\lambda Y + \omega +z,$$
where $\lambda Y$, $\omega$ and  $z$ are orthogonal vectors for the $Q$-scalar product, $\dive(z)=0$, and $\lambda$, $\omega$ $\in\hhh_0$ are defined by
$$
\lambda = \sum_{(i,j)\in E} x_{i,j}\;\;\;\w_{i,j}= Y_{i,j} (v_{i^*}+v_{j})\text{ where }v= (G^Y)^{-1} (h)\in\aaa\text{ with }h= \demi \dive(x)\in \aaa.
$$
Note that
$$
Q(\w, \w)= -2<h, (G^Y)^{-1}h>.
$$
On the other hand we prove in Section \ref{jac} that the Jacobian of the change of variables 
$$
(x_e)\in \hhh\rightarrow (\lambda,(z_e),(h_i))\in \R\times\lll_0\times \aaa
$$ 
for the measures $dx_\hhh=\prod_{e\in \tilde E} dx_e$ on $\hhh$, $d z_{\lll_0}$ defined in Definition~\ref{lem_bases} and $dh_\aaa=\prod_{i\in V_1} dh_i$ on $\aaa$, is $\pm 1$, so that
\begin{eqnarray}\label{Gaussian}
\int_{\hhh} e^{-{1\over 4} Q(x)} dx_\hhh = 2\sqrt\pi  \int_{\lll_0} e^{-\quart Q(z)} dz_{\lll_0} \int_{\aaa} e^{\demi<h,(G^Y)^{-1}h>} \prod_{i\in V_1} dh_i.
\end{eqnarray}
\end{lemma}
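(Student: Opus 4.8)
The plan is to obtain the decomposition through successive $Q$-orthogonal projections and to verify the quantitative claims by direct computation, using throughout the three consequences of $Y\in\lll_1$: the $\star$-invariance $Y_{i,j}=Y_{j^*,i^*}$, the balance $Y_i^\to=Y_i^\leftarrow=:Y_i$ coming from $\dive(Y)=0$, and $Y_i=Y_{i^*}$. The first step is the identity $Q(Y,x)=\sum_{(i,j)\in E}x_{i,j}$ for every $x\in\hhh$: since $(BY)_i=\demi(Y_i^\to+Y_i^\leftarrow)=Y_i$, one has $\sum_i\frac1{Y_i}(BY)_i(Bx)_i=\sum_i(Bx)_i=\sum_{(i,j)\in E}x_{i,j}$, so the three terms of $Q$ collapse; in particular $Q(Y,Y)=\sum_{(i,j)\in E}Y_{i,j}=1$. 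Hence, with $\lambda=\sum_{(i,j)\in E}x_{i,j}$, the vector $\lambda Y$ is the $Q$-orthogonal projection of $x$ onto $\R Y$ and $x-\lambda Y$ has vanishing total sum. A reindexing $j\mapsto j^*$ gives $\dive(x)(i^*)=-\dive(x)(i)$, so $h:=\demi\dive(x)=\demi\dive(x-\lambda Y)\in\aaa\subseteq\kkk_0$; one then sets $v=(G^Y)^{-1}(h)\in\aaa$ (well defined on $\kkk_0$, and in $\aaa$ by the $\star$-symmetry of $G^Y$), $\omega_{i,j}=Y_{i,j}(v_{i^*}+v_j)$, and $z:=x-\lambda Y-\omega$. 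The relation $\omega_{j^*,i^*}=\omega_{i,j}$ shows $\omega\in\hhh$, and from $v\in\aaa$ (so $v_i+v_{i^*}=0$) and $Y_i=Y_{i^*}$ one gets $\sum_{(i,j)\in E}\omega_{i,j}=2\sum_iv_iY_i=0$.

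To see $z\in\lll_0$, compute $\dive(\omega)(i)=\omega_i^\to-\omega_i^\leftarrow$: factoring out $v_{i^*}$ and $v_i$, using $Y_i^\to=Y_i^\leftarrow=Y_i=Y_{i^*}$, and reindexing $\sum_{j:j\to i}Y_{j,i}v_{j^*}$ by $j\mapsto j^*$ into a sum of $Y_{i^*,k}v_k$ over out-neighbours $k$ of $i^*$, the terms in $v_i$ and $v_{i^*}$ cancel, leaving $\dive(\omega)(i)=(G^Yv)(i)-(G^Yv)(i^*)=2\,\ppp_\aaa(G^Yv)(i)=2h_i$. Hence $\dive(z)=\dive(x)-\dive(\omega)=0$; combined with the fact that $x-\lambda Y$ and $\omega$ both have zero total sum, this gives $z\in\hhh\cap\ddd\cap\{\sum_{(i,j)\in E}z_{i,j}=0\}=\lll_0$.

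For the orthogonality, $Q(\lambda Y,\omega)=\lambda\sum_E\omega_e=0$ and $Q(\lambda Y,z)=\lambda\sum_Ez_e=0$ follow at once from $Q(Y,\cdot)=\sum_E(\cdot)$. For $Q(\omega,z)$ one uses that $z\in\lll_0$ forces $Bz_i=z_i^\to=z_i^\leftarrow=z_i$, so $Q(\omega,z)=\sum_{(i,j)\in E}(v_{i^*}+v_j)z_{i,j}-\sum_i\frac1{Y_i}(B\omega)_iz_i$; writing $(B\omega)_i=Y_i(v_i+v_{i^*})+\ppp_\sss(G^Yv)(i)$ and summing the first sum over $i$ and over $j$ separately, the $Y_i(v_i+v_{i^*})$-parts cancel and one is left with $-\sum_i\frac1{Y_i}\ppp_\sss(G^Yv)(i)z_i$, which is $0$ because $\ppp_\sss(G^Yv)=\ppp_\sss(h)=0$. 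For $Q(\omega,\omega)$ the same formula for $(B\omega)_i$ shows $(B\omega)_i=0$ (its first piece vanishes since $v\in\aaa$, its second since $h\in\aaa$), and $\sum_E\omega_e=0$, so $Q(\omega,\omega)$ reduces to its first term $\sum_{(i,j)\in E}Y_{i,j}(v_{i^*}+v_j)^2=\sum_{(i,j)\in E}Y_{i,j}(v_j-v_i)^2$; expanding the square and using the balance relation, this equals $-2<v,G^Yv>=-2<v,h>=-2<h,(G^Y)^{-1}h>$.

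Finally, \eqref{Gaussian} follows by assembling the pieces. Granting (Section~\ref{jac}) that the change of variables $x\mapsto(\lambda,z,h)$ has Jacobian $\pm1$, $Q$-orthogonality gives $Q(x)=\lambda^2Q(Y)+Q(\omega)+Q(z)=\lambda^2+Q(\omega)+Q(z)$, so the Gaussian over $\hhh$ factorises into $\int_\R e^{-\lambda^2/4}\,d\lambda=2\sqrt\pi$, the integral $\int_{\lll_0}e^{-\quart Q(z)}(dz)_{\lll_0}$, and $\int_\aaa e^{-\quart Q(\omega)}\prod_{i\in V_1}dh_i=\int_\aaa e^{\demi<h,(G^Y)^{-1}h>}\prod_{i\in V_1}dh_i$, which is \eqref{Gaussian}. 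I expect the main obstacle to be the $\star$-bookkeeping in the two computations of the third paragraph — the reindexings $i\mapsto i^*$, the systematic use of $Y_i=Y_{i^*}$ and $Y_i^\to=Y_i^\leftarrow$, and the precise sense in which $G^Y$ is inverted on $\aaa$ — so that $\dive(\omega)=\dive(x)$ and $Q(\omega,\omega)=-2<h,(G^Y)^{-1}h>$ come out exactly, with no leftover terms.
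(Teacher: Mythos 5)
Your proposal is correct and follows essentially the same route as the paper: the same decomposition $x=\lambda Y+\omega+z$ with the same formulas for $\lambda$, $h$, $v$ and $\omega$, the same computation of $Q(x,Y)=\sum_E x_{i,j}$, and the same Gaussian factorisation. The only minor difference is that you verify $Q(\omega,z)=0$ by direct computation via $(B\omega)_i=Y_i(v_i+v_{i^*})+\ppp_\sss(G^Yv)_i$, whereas the paper observes that $Q\omega=\nabla v$ lies in $\im\nabla=(\ker\dive)^\perp$; both arguments work, and your version usefully makes explicit the $\star$-bookkeeping (e.g. $\dive(\omega)=2\ppp_\aaa(G^Yv)$ and $\sum_E\omega_e=0$) that the paper leaves implicit.
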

\begin{proof}
Note that $x':=x-\lambda Y\in \hhh_0$ since $Y\in \lll_1$. Moreover for any $x\in \hhh$, we have
$$
Q(x,Y)= \sum_{i\to j} x_{i,j}-\sum_{i\in V} x_i+\left(\sum_{i\to j} Y_{i,j}\right)\left(\sum_{i\to j} x_{i,j}\right)= \sum_{i\to j} x_{i,j},
$$
 which implies $Q(Y,Y)=1$, $Q(z,Y)=Q(\w,Y)=0$ since $z$, $\om$ $\in \hhh_0$.
 
Let us denote by $q$ the operator on $\hhh_0$ such that $Q(y,y')=\left<y,qy'\right>$ for $y,y'\in \hhh_0$. Let us now find $\om\in\hhh_0$ such that $q\om=\nabla v$ for some $v$, and $\dive(\om)=\dive(x')$: note that this will directly imply that $\om$ is orthogonal, with respect to the scalar product $Q(.,.)$, to all $z\in\hhh_0$ such that $\dive(z)=0$ (and in particular to $z=x'-\om$), since $(\im \nabla)^\perp =\ker\dive$. Now it follows from \eqref{mform} that $q$ is a diagonal matrix with coefficients $D_{\tilde E}^{-1}$ on the subspace $\hhh\cap\{x\in\R^{\tilde{E}}\tx{ s.t. }x_{i^*}=-x_i\}$, on which we have
$(q\om)_{ij}=Y_{ij}^{-1}\om_{ij}$. Our first condition yields $\om_{ij}=Y_{ij}(v_j-v_i)$, and the second is satisfied in particular if
$\om_i=(G^Yv)_i=\dive(x)_i/2=h_i$ for all $i\in V$, which is equivalent to $v=(G^Y)^{-1}h$.
\end{proof}

\begin{lemma}
We have
\begin{eqnarray*}
&&\int_{\hhh} e^{-{1\over 4} Q(x)} dx 
\\
&=& \sqrt{2}^{\vert\tilde E\vert+\vert V_0\vert +\vert V_1\vert}\sqrt{\pi}^{\vert\tilde E\vert}\left(\prod_{(i,j)\in \tilde E} \sqrt{Y_{i,j}}\right)\left(\prod_{i\in  V_0\cup V_1} \sqrt{Y_i}\right)\det((-G^Y+ 2D_V\un {}^t \un D_V)_{|\sss\times\sss})
\end{eqnarray*}
\end{lemma}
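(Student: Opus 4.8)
The plan is to evaluate the left-hand side directly as a finite-dimensional Gaussian integral on $\hhh\simeq\R^{\tilde E}$ and then to shrink the Gaussian determinant by two successive size reductions until it becomes the $\sss\times\sss$ determinant in the statement. By \eqref{mform} we have $\tfrac14 Q(x)=\tfrac12\,{}^txAx$ with the symmetric matrix $A=D_{\tilde E}^{-1}\big(\Id_{\tilde E}+D_{\tilde E}\,{}^tB\,M\,B\big)$, where $M:=-\tfrac12 D_V^{-1}+\tfrac12\un\,{}^t\un$. The orthogonal decomposition of Lemma~\ref{Ortho} shows that $Q$ is positive definite on $\hhh$ (for $x=\lambda Y+\omega+z$ one has $Q(x)=\lambda^2+Q(\omega)+Q(z)$, a sum of three nonnegative terms vanishing together only at $x=0$: $Q(z)\ge0$ by \eqref{Q}, and $Q(\omega)=-2\langle h,(G^Y)^{-1}h\rangle\ge0$ since $G^Y+{}^tG^Y$ is negative semidefinite), hence $A$ is positive definite and
\[
\int_{\hhh}e^{-\frac14 Q(x)}\,dx=\frac{\sqrt{2\pi}^{\,\vert\tilde E\vert}}{\sqrt{\det A}},\qquad \det A=\Big(\prod_{e\in\tilde E}Y_e\Big)^{-1}\det\!\big(\Id_{\tilde E}+D_{\tilde E}\,{}^tB\,M\,B\big).
\]

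For the first reduction, Sylvester's (Weinstein--Aronszajn) identity gives $\det(\Id_{\tilde E}+D_{\tilde E}\,{}^tB\,M\,B)=\det(\Id_V+M\,N)$ with $N:=B\,D_{\tilde E}\,{}^tB$, a symmetric positive semidefinite $V\times V$ matrix. For the second reduction, note that $B$ encodes the (unsigned) $\star$-symmetric incidence structure, so $\im B\subseteq\sss$ and ${}^tB$ annihilates $\aaa$; hence $N$ vanishes on $\aaa$ and maps $\R^V$ into $\sss$, while $D_V^{-1}$ (because $Y_i=Y_{i^*}$) and $\un\,{}^t\un$ both preserve $\sss$ and $\aaa$. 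Therefore $\Id_V+MN$ is block diagonal for $\R^V=\sss\oplus\aaa$ with identity $\aaa$-block, and $\det(\Id_V+MN)=\det\big(\Id_\sss+(MN)_{|\sss\times\sss}\big)$, a determinant of size $\vert V_0\vert+\vert V_1\vert$.

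The heart of the argument is to identify this block with $-G^Y$. Expanding $(Nh)_i=\sum_{e}B_{i,e}Y_e\,({}^tBh)_e$ for $h\in\sss$, every edge $e$ incident to $\{i,i^*\}$ contributes $\tfrac12 Y_e\,(h_{[i]}+h_{[c]})$, where $[c]$ denotes the $\star$-orbit of its other endpoint; summing, and using $\dive(Y)=0$ (so that the total $Y$-weight of the edges at $i$, in and out, equals $2Y_i$) together with $Y_{i,j}=Y_{j^*,i^*}$, one obtains
\[
N_{|\sss\times\sss}=2\,(D_V)_{|\sss\times\sss}-(-G^Y)_{|\sss\times\sss},
\]
where $(\cdot)_{|\sss\times\sss}=\ppp_\sss(\cdot)\ppp_\sss$; the factor $2D_V$, and the sign in front of the off-diagonal $Y$-terms being opposite to that in $-G^Y$, are exactly the effect of $B$ being signless. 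Substituting, and using that $\un$ is a left null vector of $G^Y$ (that is, ${}^t\un\,G^Y=0$, again from $\dive(Y)=0$) to cancel the cross term, one factors $\tfrac12 D_V^{-1}$ out on the left:
\[
\Id_\sss+(MN)_{|\sss\times\sss}=\tfrac12\,(D_V^{-1})_{|\sss\times\sss}\,\big(-G^Y+2D_V\un\,{}^t\un D_V\big)_{|\sss\times\sss}.
\]
Hence $\det\big(\Id_\sss+(MN)_{|\sss\times\sss}\big)=2^{-\vert V_0\vert-\vert V_1\vert}\big(\prod_{i\in V_0\cup V_1}Y_i\big)^{-1}\det\big((-G^Y+2D_V\un\,{}^t\un D_V)_{|\sss\times\sss}\big)$, and inserting this in the first display, together with the bookkeeping of the powers of $2$ and $\pi$, yields the announced formula (with the $\sss\times\sss$ determinant entering through the Gaussian normalisation $1/\sqrt{\,\cdot\,}$).

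I expect the main obstacle to be the combinatorial identity for $N_{|\sss\times\sss}$: it is immediate for ``generic'' mirror-pairs $\{i,i^*\}$, but one must treat with care palindrome vertices $i\in V_0$ and edges joining a mirror-pair, where the incidence multiplicities change; one also has to check that the various $\ppp_\sss(\cdot)\ppp_\sss$ blocks compose as written and that Sylvester's identity produces exactly the operator determinants above, so that no powers of $2$ beyond the stated $2^{\vert V_0\vert+\vert V_1\vert}$ are generated. The only other point requiring justification is the positive-definiteness of $Q$ on $\hhh$ used in the first step, and that is immediate from Lemma~\ref{Ortho}.
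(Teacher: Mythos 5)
Your proof is correct and follows essentially the same route as the paper's: the standard Gaussian formula on $\hhh\simeq\R^{\tilde E}$, the Sylvester (Weinstein--Aronszajn) identity to pass to a $V\times V$ determinant, the identification $BD_{\tilde E}\,{}^tB=\ppp_\sss G^Y\ppp_\sss+2\ppp_\sss D_V\ppp_\sss$ together with ${}^t\un G^Y=0$, and the restriction to $\sss$ after factoring out $\demi D_V^{-1}$. Note that what you obtain --- and what the paper's own proof and its subsequent use of the lemma also give --- is the formula with $1/\sqrt{\det\bigl((-G^Y+2D_V\un\,{}^t\un D_V)_{|\sss\times\sss}\bigr)}$ rather than the determinant in the numerator as printed in the statement, which is a typo there, not an error in your argument.
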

\begin{proof}

It follows from \eqref{mform} that
$$
\int_{\hhh}  e^{-\quart Q(x)} dx = \sqrt{2\pi}^{\vert \tilde E\vert} {\left( \prod_{(i,j)\in \tilde E} \sqrt{Y_{i,j}}\right) \over\sqrt{ \det(\Id_{\tilde E}-\demi D_{\tilde E} {}^tBD_V^{-1}B+\demi D_{\tilde E} {}^t B\un {}^t\un B)}}
$$

Using the classical formula $\det( \Id_n + AB)=\det(\Id_m +BA)$ for $A$ (resp. $B$) a $n\times m$ (resp. $m\times n$) matrix, we deduce
\beq
\det\left(\Id_{\tilde E}-\demi D_{\tilde E} {}^tBD_V^{-1}B+\demi D_{\tilde E} {}^t B \un {}^t\un B\right)
&=&
\det\left(\Id_{\tilde E}-\demi D_{\tilde E} {}^tBD_V^{-1}+\demi D_{\tilde E} {}^t B\un {}^t \un B\right)
\\
&=&
\det \left(\Id_{V} -\demi B D_{\tilde E} {}^t BD_V^{-1}+ \demi B D_{\tilde E} {}^t B \un {}^t\un\right)
\eeq

Direct computation yields,  for $h\in \R^V$, $(i,j)\in \tilde E$
\beq
(D_{\tilde E} {}^t B h)_{i,j}= \demi Y_{i,j} (h_i+h_j+h_{i^*}+h_{j^*}),
\eeq
hence,
\beq
B D_{\tilde E} {}^t B h_i &=&{1\over 4}\left(\sum_{j, i\rightarrow j} Y_{i,j}(h_i+h_j+h_{i^*} +h_{j^*}) + \sum_{j,i^*\rightarrow j^*} Y_{i^*,j^*}(h_j+h_i+h_{j^*} +h_{i^*})\right)
\\
&=&
{1\over 4} (G^Yh_i+G^Yh^*_{i}+ G^Yh_{i^*}+G^Yh^*_{i^*})+ (Y_i h_i+Y_i h_{i^*})
\\
&=&
\ppp_\sss G^Y \ppp_\sss h_i + 2D_V \ppp_\sss h_i. 
\eeq
where in the second line $h^*$ is the vector $h^*_i= h_{i^*}$. Since $Y_i=Y_{i^*}$ we have 
$$D_V\ppp_\sss=\ppp_\sss D_V=\ppp_\sss  D_V\ppp_\sss,
$$ 
hence
$$
B D_{\tilde E} {}^t B= \ppp_\sss G^Y \ppp_\sss+2\ppp_\sss D_V \ppp_\sss.
$$
Similarly 
$$
B D_{\tilde E} {}^t B \un {}^t\un h_i = 2Y_i (\sum_{j\in V} h_j)
$$
thus $B D_{\tilde E} {}^t B \un {}^t\un = 2 D_V \un  {}^t \un = 2 \ppp_\sss D_V \un {}^t\un \ppp_\sss$.

In summary, we have
\beq
&&\det \left(\Id_{V} -\demi B D_{\tilde E} {}^t BD_V^{-1}+ \demi B D_{\tilde E} {}^t B \un {}^t\un\right)
\\
&=&
\det \left(\Id_{V}- \demi \ppp_\sss (G^Y(D_V)^{-1} +2\Id)\ppp_\sss+ \ppp_\sss D_V \un {}^t\un \ppp_\sss\right)
\\
&=&
\det\left( (-\demi  G^Y (D_V)^{-1}+ D_V \un {}^t \un)_{|\sss}\right)
\\
&=&
\left(\prod_{i\in  V_0\cup V_1} Y_i\right)^{-1} \det\left( (-\demi G^Y+ D_V \un {}^t \un D_V)_{|\sss}\right)
\\
&=&
2^{-\vert V_0\vert -\vert V_1\vert} 
\left(\prod_{i\in  V_0\cup V_1} Y_i\right)^{-1} \det\left( (- G^Y+ 2 D_V \un {}^t \un D_V)_{|\sss}\right)
\eeq
where by $\det(M_{|\sss})$ we understand the determinant of the matrix $\ppp_\sss M\ppp_\sss$ viewed as 
a linear operator from $\sss$ to $\sss$.
\end{proof}

Equation \eqref{Gaussian} now enables us to conclude. Indeed,
$$
\int_{\aaa} e^{\demi<h,(G^Y)^{-1}h>} \prod_{i\in V_1} dh_i= \det\left((-G_Y)^{-1}|_{\aaa}\right)=\det\left((-G^Y+ 2 D_V \un {}^t \un D_V)^{-1}|_{\aaa}\right),
$$
where we use in the last equality that $\ppp_\aaa (-G_Y)^{-1}\ppp_{\aaa}= \ppp_\aaa (-G^Y+ 2 D_V \un {}^t \un D_V)^{-1}\ppp_\aaa $ on $\aaa$.
 
On the other hand, 
\beq
&&\int_{\lll_0}  e^{-\quart Q(z)} (dz)_{\lll_0}
=
{1\over 2\sqrt{\pi} }{\int_{\hhh}  e^{-\quart Q(x)} dx 
  \over \int_{\aaa} e^{\demi<h,(G^Y)^{-1}h>} \prod_{i\in V_1} dh_i}
  \\
&=&
 \sqrt{2}^{\vert\tilde E\vert +\vert V_0\vert+\vert V_1\vert -2} \sqrt{\pi}^{\vert\tilde E\vert -\vert V_1\vert-1} 
 \left( \prod_{(i,j)\in \tilde E} \sqrt{Y_{i,j}}\right)\left( \prod_{i\in V_0\cup V_1} \sqrt{Y_i} \right)
\\
&&\boldsymbol{\cdot}\; {\sqrt{\det\left(((- G^Y+ 2 D_V \un {}^t \un D_V)^{-1})_{|\aaa}\right)}\over \sqrt{\det\left( (- G^Y+ 2 D_V \un {}^t \un  D_V)_{|\sss}\right)}}
\\
&=&
 \left( \prod_{(i,j)\in \tilde E} \sqrt{Y_{i,j}}\right)\left( \prod_{i\in V_0\cup V_1} \sqrt{Y_i} \right)
 { \sqrt{2\pi}^{\vert\tilde E\vert -\vert V_1\vert -1} \sqrt{2}^{\vert V_0\vert + \vert V_1\vert}\over \sqrt{2}\sqrt{\det(-G^Y+ 2 D_V \un {}^t \un D_V)}}
\eeq
since $\sss$ and $\aaa$ are orthogonal. In the last equality, we used that for a matrix $M$ with block decomposition on 
a linear decomposition of the space $E=E_1 + E_2$
$$
M
=\left(\begin{array}{ll} 
A &B
\\
C& D
\end{array}\right)
$$
then $ \det M = {\det(A)\over \det((M^{-1})_{| E_2})}$.
Finally, since the sum of lines (resp. columns) of $G^Y$ are zero and since $\sum Y_i =1$, we have, summing all lines in the line $j_0$ and all columns in 
the column $j_0$ that
$$
\det(-G^Y+ 2 D_V {}^t \un D_V)= 2 \det( (-G^Y)_{|V\setminus\{j_0\}}),
$$ for any choice of vertex $j_0$.

\section{Appendix}
\subsection{Bases of $\lll_0$, definition~\ref{lem_bases}.}
\label{bases}
Let us first describe the bases of $\lll=\{ x\in \hhh, \; \dive(x)=0\}$.
Consider the graph $\overline \ggg$ where $\overline \ggg=\ggg$ if $V_0=\emptyset$ and $\overline \ggg=(V_1\cup V_1^*\cup\{\delta\}, E)$ where the vertices in $V_0$ are contracted to a single vertex $\delta$ and where $E$ denotes the edges obtained from the edges of $\ggg$ by identification of the vertices of $V_0$ to the single point $\delta$ (note that by a slight abuse of notation we also denote by $E$ the edges of $\overline \ggg$ since each edge of $\ggg$ gives one edge of $\overline \ggg$ ; in $\overline \ggg$ we may of course have multiple edges and also loops $(\delta,\delta)$ obtained by the contraction of $V_0$). 
\begin{lemma}
A subset $B\subset \tilde E$ is a basis of $\lll$ if and only if $E\setminus (B\cup B^*)$ is a spanning tree of $\overline \ggg$ (where we understand
that $B^*=\{(j^*,i^*), \; (i,j)\in B\}$).
\end{lemma}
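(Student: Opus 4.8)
The plan is to reduce the statement to the classical fact that the coordinate bases of the cycle (flow) space of a connected graph are exactly the complements of its spanning trees. Identify $\hhh$ with $\R^{\tilde E}$. By the definition above, $B\subset\tilde E$ is a basis of $\lll$ precisely when the coordinate projection $\lll\to\R^{B}$, $x\mapsto(x_e)_{e\in B}$, is a linear isomorphism, equivalently when $\hhh=\lll\oplus\hhh_B$ with $\hhh_B:=\{x\in\hhh:\ x_e=0\ \text{for all}\ e\in B\}$. The first step is to identify $\lll$ with a space of flows on $\overline G$: for $x\in\hhh$ one has $\dive(x)\in\aaa=\{h\in\R^V:\ h_{i^*}=-h_i\}$, and every element of $\aaa$ vanishes on $V_0$; hence, for such $x$, the equation $\dive(x)=0$ on $V$ is equivalent to the edge-function $x$, read on $\overline G$ (i.e.\ after collapsing $V_0$ to $\delta$), being divergence-free on $\overline V$. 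Thus $\lll$ is exactly the space of flows on $\overline G$ whose edge-values are invariant under the involution $e\mapsto e^{*}$, and $E\setminus(B\cup B^{*})$, being $*$-invariant, is a natural candidate co-tree.

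For the ``if'' direction, suppose $T:=E\setminus(B\cup B^{*})$ is a spanning tree of $\overline G$. Given any $(c_e)_{e\in B}$, put the value $c_e$ on both $e$ and $e^{*}$ for $e\in B$ (consistent on $B\cap B^{*}$), and let $x$ be the unique flow on $\overline G$ with these co-tree values on $E\setminus T=B\cup B^{*}$, obtained from the fundamental cycles of $T$. The map $x\mapsto (x_{e^{*}})_e$ carries flows to flows and fixes the prescribed co-tree data, so by uniqueness it fixes $x$; thus $x\in\lll$ with $(x_e)_{e\in B}=(c_e)$. Hence $\lll\to\R^{B}$ is onto, and taking $c\equiv 0$ gives injectivity, so $B$ is a basis. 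Applying this to a single spanning tree — which exists since the standing hypothesis (a directed path from $i_0$ to some vertex of $V_0$) forces $\delta$ to be joined to every vertex, so $\overline G$ is connected — also yields $\dim\lll=|\tilde E|-|V_1|$.

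For the ``only if'' direction, let $B$ be a basis, so $\lll\cap\hhh_B=\{0\}$ and $|B|=|\tilde E|-|V_1|$. If $T$ contained a cycle, its cycle-flow $\sigma$ would be supported on the $*$-invariant set $T$, hence so would $\demi(\sigma+\sigma^{*})$ with $\sigma^{*}_e:=\sigma_{e^{*}}$; this is a flow on $\overline G$ invariant under $e\mapsto e^{*}$, i.e.\ an element of $\lll\cap\hhh_B$, and one checks it is nonzero — contradicting injectivity. So $T$ is acyclic, and the edge count $|T|=|E|-|B\cup B^{*}|=|\overline V|-1$ (using $|B|=|\tilde E|-|V_1|$ and that the self-paired edges $E_s=\{(i,i^{*})\in E\}$ all lie in $B$) then forces $T$ to be a spanning tree. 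Contrapositively, if $T$ fails to span $\overline G$, a dimension count shows $\lll\to\R^{B}$ cannot be onto.

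The part needing the most care, and the main obstacle, is the bookkeeping around the self-paired edges $E_s$ and the fact that $e\mapsto e^{*}$ reverses edge orientations in $\overline G$: one must check that the $*$-invariant flows on $\overline G$ really coincide with $\lll$, that a $*$-invariant spanning tree of $\overline G$ contains no edge of $E_s$ (since the fixed-point set of an involution of a tree is connected and contains $\delta$, yet cannot reach an endpoint of a self-paired edge), and that the resulting edge counts are tight; establishing that $\demi(\sigma+\sigma^{*})$ above is nonzero — equivalently that $T$ supports no purely $*$-anti-invariant cycle — is the one spot where these facts are genuinely combined with the dimension count. The remainder is the standard linear algebra of cycle spaces and spanning trees.
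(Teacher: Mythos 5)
Your ``if'' direction is correct, and it is a genuinely more direct route than the paper's (the paper works dually, identifying $\lll^\perp\cap\hhh$ with $\nabla\aaa$ and arguing about bases of that space): the fundamental-cycle construction, combined with the observation that $x\mapsto x^*$ preserves flows on $\overline G$ and fixes a $*$-invariant co-tree prescription, does show that the projection $\lll\to\R^B$ is a bijection. The problem is the ``only if'' direction, and it sits exactly at the two points you flag but never prove: (a) that every self-paired edge lies in $B$, which is what your edge count $|T|=|\overline V|-1$ rests on, and (b) that $\demi(\sigma+\sigma^*)\neq0$. Your only argument for (a) — the fixed-point set of an involution of a tree is a subtree containing $\delta$ — presupposes that $T$ is already a tree, which is the conclusion, not a hypothesis; so the argument is circular, and the ``dimension count'' you defer to is never carried out.

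These gaps cannot be closed, because this direction of the lemma fails whenever there are self-paired edges $(i,i^*)$ with $i\notin V_0$ — and these occur in the paper's flagship example. Take the de Bruijn graph on $S=\{0,1\}$ with $k=2$, and write $a=01$, so $a^*=10$, $V_0=\{00,11\}$, $V_1=\{a\}$; the edges $(a,a^*)$ and $(a^*,a)$ are both self-paired. Then $\hhh\cong\R^{\tilde E}\cong\R^6$ and $\lll$ is cut out by the single divergence relation at $a$, namely $y_{(a,a^*)}+y_{(a,11)}=y_{(00,a)}+y_{(a^*,a)}$, so $\dim\lll=5$ and $B=\tilde E\setminus\{(a^*,a)\}$ is a basis of $\lll$ (the omitted coordinate is a linear function of the other five). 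Yet $E\setminus(B\cup B^*)=\{(a^*,a)\}$ is a single edge not incident to $\delta$, hence not a spanning tree of the three-vertex graph $\overline G$; note it is acyclic, so the failure here is precisely in your step (a), not in the symmetrised-cycle step. (For what it is worth, the paper's own proof is also only one-directional and asserts without justification that the complement ``must connect the vertices of $\overline G$'', so it founders on the same example.) As written, your ``only if'' argument, and indeed the statement itself, can only be salvaged by excluding self-paired edges or otherwise amending the claim.
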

\begin{proof}
Indeed, we have 
$$
\lll^\perp \cap \hhh =\{\nabla h, \;\; h\in \aaa\}.
$$
Indeed classically, 
$$
\lll^\perp =(\ker\dive\cap \hhh)^\perp= \im \nabla+ \hhh^\perp,
$$ 
hence $\lll^\perp\cap \hhh= \im\nabla\cap \hhh$ and $\nabla h\in \hhh$ if and only if there exists a constant $c$ such that $h-c\in \aaa$.

This implies that if $T\subset \tilde E$ is a basis of $\lll^\perp\cap \hhh$ then $T\cup T^*$ is a spanning tree of $\overline \ggg$. Indeed, $\nabla h(e^*)=\nabla h(e)$
if $h\in \aaa$ and thus $T\cup T^*$ cannot contain any cycle in $\overline \ggg$ (otherwise the corresponding $\nabla h(e)$
would be linearly dependent) and must connect the vertices of $\overline \ggg$.
\end{proof}

Then the bases of $\lll_0$ are obtained from the bases of $\lll$ by $B\setminus \{e_0\}$ where $e_0\in \tilde E\setminus B$. The fact that the Jacobian between
two different bases is $\pm 1$ is a consequence of the computation of the change section.

\subsection{The Jacobian of the change of variable in Lemma \ref{Ortho}}
\label{jac}
The subset  $E\setminus (B\cup B^*)$ is a spanning tree of $\overline \ggg$; moreover it can always be decomposed in the
following way : there exists $\overline T\subset E$ such that
 \begin{itemize}
\item $\overline T$ is a connected tree in $\overline \ggg$.
\item
$\overline T\cap \overline T^*=\emptyset$ and $\overline T\cup \overline T^*=E\setminus (B\cup B^*)$.
\end{itemize}
Let $\overline V_1$ be the set of vertices connected by $\overline T$ where we eventually remove $\delta$. Then $\overline V_1^*$ is the set of
vertices connected by  $\overline T^*$  where we eventually remove $\delta$ and 
$\overline V_1\cap \overline V_1^*=\emptyset$, $\overline V_1\cup \overline V_1= V_1\cup V_1^*$ (i.e. $\overline V_1$ is a 
representation of the points of $V_1\cup V_1^*$ quotiented by the identification of $i$ with $i^*$, possibly different from $V_1$).
It means that up to a change of definition of $\tilde E$, $V_1$ and $T$ in section~\ref{bases} we can always assume that
\begin{itemize}
\item
$T\cup T^*= E\setminus(B\cup B^*)$ and $T\cap T^*=\emptyset$
\item
$T\subset \tilde E$ is a tree connecting the vertices $\{\delta\}\cup V_1$ (or $V_1$ if $V_0=\emptyset$), hence $T^*$ is a tree connecting the vertices $\{\delta\}\cup V_1^*$.
\end{itemize}
Up to a sign, the change of variable in Lemma \ref{Ortho} is the same as the change of variable
$$
(x_e)_{e\in \tilde E} \in \hhh\rightarrow (\lambda,(z_e)_{e\in B},(h_i)_{i\in V_1})\in \R\times\lll_0\times \aaa
$$
We decompose this change of variables in three steps. The first step
$$
(x_e)_{e\in\tilde E}\rightarrow (\lambda, y=x-\lambda Y)\in \R\times \hhh_0
$$
has clearly Jacobian $1$ for any choice of basis $(y_e)_{e\in \tilde E\setminus \{e_0\}}$, for some $e_0\in \tilde E$.
Since, $\dive(Y)=0$ we have $h=\demi\dive(x)=\demi\dive(y)$ and we then make the change of variables
$$
((y_e)_{e\in B\setminus\{e_0\}}, (y_e)_{e\in T}) \rightarrow ((y_e)_{e\in B\setminus\{e_0\}}, (h_i)_{i\in V_1})
$$
which is linear with matrix
$$
\left(\begin{array}{ll}

\Id& 0
\\
*& J_T
\end{array}\right)
$$
where $J_T$ is the incident matrix of the tree $T$ given by
$$
(J_T)_{e,i}=\left\{\begin{array}{ll} 1,& \hbox{ if $\underline e=i$}
\\
-1, &\hbox{ if $\overline e=i$}
\\
0,&\hbox{ otherwise}
\end{array}\right.
$$
for $e\in T$ and $i\in \overline V_1$.
Since $T$ is a spanning tree on $\overline V_1$, then $\det(J_T)= \pm 1$.
In the last step we make the change of variables
$$
((y_e)_{e\in B\setminus\{e_0\}}, (h_i)_{i\in V_1})
\rightarrow
((z_e)_{e\in B\setminus\{e_0\}}, (h_i)_{i\in V_1}).
$$
which has matrix
$$
\left(\begin{array}{ll}
\Id& *
\\
0& \Id
\end{array}\right)
$$
and has Jacobian 1.

Hence, the change of variable of Lemma \ref{Ortho} is $\pm 1$. The fact the the Jacobian does not depend on the choice of the
basis $B\setminus \{e_0\}$ implies that the Jacobian between two of these bases is always $\pm 1$. 

\paragraph{{\bf Acknowledgment.}} 
This work is supported by National Science Foundation of China (NSFC), 
grant No.\ 11771293, by the LABEX MILYON
(ANR-10-LABX-0070) of Universit\'e de Lyon and the project ANR LOCAL (ANR-22-CE40-0012-02) operated by the Agence Nationale de la Recherche (ANR) 
in France, and by the Australian Research Council (ARC) grant  DP180100613.

\footnotesize
\bibliographystyle{plain}
\bibliography{k-dependent-ERRW}

\end{document}